\newtheorem{theorem}{Theorem}
\def\u{{\bm u}}
\def\el{\nonumber}
\def\cl{\nonumber \\}
\newcommand{\bm}[1]{\mbox{\boldmath{$#1$}}}
\newcommand{\Eq}[1]{(\ref{eq:#1})}
\def\u{\bm u}
\def\vt{\bm v}
\def\f{\bm f}
\def\u{{\bm u}}
\def\wt{{\bm w}}
\def\vt{{\bm v}}
\newtheorem{remark}{Remark}
\def\u{{\bm u}}
\def\vt{{\bm v}}
\def\wt{{\bm w}}
\def\st{{\bm \sigma}}
\def\tt{{\bm \tau}}
\def\f{{\bm f}}
\def\cl{\nonumber\\}
\def\el{\nonumber}
\begin{document}

\begin{frontmatter}

\title{A stabilized finite element method for the two-field and three-field Stokes eigenvalue problems}

\author[aut1,aut2]{\"{O}nder T\"{u}rk}
\ead{onder.turk@yandex.com}
\author[aut3]{Daniele Boffi}
\ead{daniele.boffi@unipv.it}
\author[aut1]{Ramon Codina\corref{cor}}
\ead{ramon.codina@upc.edu}
\address[aut1]{Universitat Polit\`{e}cnica de Catalunya, Barcelona, Spain}
\address[aut2]{Gebze Technical University, Gebze/Kocaeli, Turkey}
\address[aut3]{Universit\`{a} di Pavia, Pavia, Italy}
\cortext[cor]{Corresponding author. Tel:+34934016486.}

\begin{abstract}
In this paper, the stabilized finite element approximation of the Stokes eigenvalue problems is considered for both the two-field (displacement-pressure) and the three-field (stress-displacement-pressure) formulations. The method presented is based on a subgrid scale concept, and depends on the approximation of the unresolvable scales of the continuous solution. In general, subgrid scale techniques consist in the addition of a residual based term to the basic Galerkin formulation. The application of a standard residual based stabilization method to a linear eigenvalue problem leads to a quadratic eigenvalue problem in discrete form which is physically inconvenient. As a distinguished feature of the present study, we take the space of the unresolved subscales orthogonal to the finite element space, which promises a remedy to the above mentioned complication. In essence, we put forward that only if the orthogonal projection is used, the residual is simplified and the use of term by term stabilization is allowed. Thus, we do not need to put the whole residual in the formulation, and the linear eigenproblem form is recovered properly. We prove that the method applied is convergent, and present the error estimates for the eigenvalues and the eigenfunctions. We report several numerical tests in order to illustrate that the theoretical results are validated.
\end{abstract}

\begin{keyword}
Stokes eigenvalue problem \sep stabilized finite elements\sep two-field \sep three-field
\end{keyword}

\end{frontmatter}

\section{Introduction}

The finite element approximation of eigenvalue problems has been studied extensively in recent years due to the important theoretical and practical applications. The significance of the analysis  maintains its attraction, and the approximation of eigenvalue problems is still a subject of active research. In particular, there is a wide area of research on the Stokes eigenvalue problem which can be set into different frameworks, and some abstract results   can be applied to a variety of mixed or hybrid type finite element eigenvalue approximation methods (see e.g.~\cite{mercier1981}).

In this paper, the problem under consideration consists of finding eigenvalues $\lambda \in \mathbb{R}$ and eigenfunctions $u\neq {0}$  
for a certain operator $\mathscr{L}$  on a given domain $\Omega$ such that 
\begin{align}
\label{eq:gen_evp}
\mathscr{L} u = \lambda  u\quad \hbox{in}~\Omega,
\end{align}
accompanied with appropriate boundary conditions on $\partial\Omega$.

Let ${\cal X}$ be a Hilbert space for which the variational form of \Eq{gen_evp} is well defined. After normalizing $u$, this variational form reads: find a nonzero $u \in {\cal X}$ and $\lambda \in \mathbb{R}$ such that 
\begin{align}
\label{eq:gen_evpvari}
B(u,v) = \lambda (u,v)   \quad \forall   v  \in  {\cal X},
\end{align}
where $B$ is the bilinear form associated to $\mathscr{L}$ and $(\cdot,\cdot)$ stands for the inner product in $L^2(\Omega)$. 

Let ${\cal X}_h$ be a finite dimensional space of ${\cal X}$ constructed from a finite element partition of size $h$. The Galerkin discretization of  \Eq{gen_evpvari} is: find ${0} \neq u_h \in {\cal X}_h$ and $\lambda_h \in \mathbb{R}$ such that 
\begin{align}
\label{eq:gen_evpvari_h}
B(u_h,v_h) = \lambda_h (u_h,v_h) \quad \forall  v_h  \in  {\cal X}_h.
\end{align}

It is well known that when $\mathscr{L}$ is either the two-field or three-field Stokes operator, the standard Galerkin approach necessitates an interpolation for the different fields satisfying the classical inf-sup (or Babu\v ska-Brezzi) condition. { 
Researchers might want  to avoid the use of schemes satisfying this condition.} This demand has led to many recent studies devoted to develop robust and efficient stabilized techniques for approximating the Stokes eigenvalue problem \cite{armentano2014,huang2012,liu2013,xie2014,huang2015}. It is worth noting that there are also alternative approaches. For instance, a solution procedure based on a pseudostress-velocity formulation, leading to a locally conservative scheme without using additional stabilizing terms, has been proposed in~\cite{meddahi2014}. 

In the convergence analysis of the eigenvalue problems, the most common approach is to deduce the error estimates and the rate of convergence from the well known Babu\v ska-Osborn theory \cite{Babuskaosborn1991} (see also \cite{boffi2010} for a comprehensive review of finite element approximation of general eigenvalue problems). In particular, the convergence of the eigenvalues and eigenvectors for the two-field Stokes problem using mixed formulations is analyzed in many works, including \cite{mercier1981}, \cite{boffi2010} and \cite{boffi1997}.  On the other hand, despite the extensive number of papers on finite element analysis of the eigenproblem, as well as of the source problem for the two-field Stokes operator, few works have been published on the three-field case. Considering the source problem, a stabilized finite element formulation based on a subgrid concept is presented and analyzed for the stress-displacement-pressure formulation in \cite{codina13}. As another work, a Galerkin least-square based method is proposed in \cite{bonvinp1}, with stability and convergence results given for the three-field Stokes formulation arising from viscoelastic models.  

The aim of this paper is to analyze the stabilized finite element method for the Stokes eigenvalue problem in both two-field and three-field formulations. The stabilization method applied is based on a subgrid scale concept. In this method, the unresolvable scales of the continuous solution are approximately taken into account. In general, when a stabilization technique based on a projection $\tilde{P}$ of the residual is applied to \Eq{gen_evpvari}, one obtains a  statement of the form
\begin{align}
\label{eq:evp_gen_s}
& B(u_h,v_h) - \lambda_h (u_h,v_h)   \nonumber \\ 
& \qquad  +  \sum_{K}  (\tilde{P}(-\mathscr{L}^{\ast} v_h +\lambda_h v_h ), {\alpha_{K}}\tilde{P}(\mathscr{L} u_h -\lambda_h u_h))_K =0, 
\end{align}
where $\mathscr{L}^{\ast}$ is the formal adjoint operator of $\mathscr{L}$, and ${\alpha_{K}}$ is  a stabilization matrix (if $u_h$ is vector valued) of numerical parameters defined within each element domain $K$. Here and in the following, $\sum_K$ stands for the summation over all elements of the finite element partition, and $(\cdot,\cdot)_K$ for the $L^2(K)$-inner product.

It is clear from \Eq{evp_gen_s} that in general the resulting system leads to a quadratic eigenvalue problem, which, apart from being much more demanding than a linear one, could introduce eigenpairs that converge to solutions which are not solutions of the original problem \Eq{gen_evpvari}.

In this study, the unresolved subscales are assumed to be orthogonal to the finite element space, which amounts to say that $\tilde{P} = P^\bot$, the appropriate orthogonal projection. Apart from its novelty in the context of Stokes eigenvalue problems, this choice is essential to establish the structure of the eigenproblem in its original form. Only in this way the components $u_h$ and $v_h$ in the last term of \Eq{evp_gen_s} vanish, the residual is simplified, and the use of term by term stabilization is allowed. We will show that this formulation is optimally convergent for an adequate choice of the algorithmic parameters on which the method depends. This will be done by applying the classical spectral approximation theory  of \cite{Babuskaosborn1991} to the associated source problems in the spirit of the methodology developed in \cite{boffi2010}. In the convergence analysis for the two-field problem, we will make use of the stability and convergence properties of the corresponding source problem, which are adapted from \cite{codina10} and \cite{codinab1}. For the three-field eigenvalue problem, the convergence and error estimates are based on the finite element analysis of the corresponding source problem provided in \cite{codina13}. This is the first finite element approximation to the three-field Stokes eigenvalue problem to the best of our knowledge.

\section{Problem statements}
\label{sec:prsts}

\subsection{Preliminaries}
\label{subsec:pre}

Let us introduce some notation. In the following, the space of square integrable functions in a domain $\omega$ is denoted by  $L^2(\omega)$, and the space of functions having distributional derivatives of order up to an integer $m \geq 0$ belonging to $L^2(\omega)$ by $H^m(\omega)$. The space of functions in $H^1(\omega)$ vanishing on its boundary $\partial \omega$ is denoted  by $H^1_0(\omega)$.
The $L^2(\omega)$ inner product in $\omega$ for scalars, vectors and tensors, is denoted by $(\cdot,\cdot )_\omega$, and the norm in a Banach space ${\cal X}$ is denoted by $\Vert\cdot\Vert_{\cal X}$.
In what follows, the domain subscript is dropped for the case $\omega=\Omega$, $\Vert\cdot\Vert$ represents the norm on $L^2(\Omega)$, and $\Vert\cdot\Vert_m$ stands for $\Vert\cdot\Vert_{H^m(\Omega)}$ for a positive or negative $m$.  A finite element partition of the domain $\Omega$ is denoted by ${\cal P}_h$, and $K \in {\cal P}_h$ denotes an element domain.  The diameter of the finite element partition is defined as  $h = \max\{ h_K | K \in {\cal P}_h\}$, where  $h_K$ is the diameter of the element domain $K$. For simplicity, we will assume quasi-uniform meshes. When $K$ is a domain of an element in a partition, $\Vert\cdot\Vert_K$ and $\Vert\cdot\Vert_{m,K}$ denote  $\Vert\cdot\Vert_{L^2(K)}$ and $\Vert\cdot\Vert_{H^m(K)}$, respectively. Throughout the paper, the notation  $\lesssim$ is used to denote an inequality up to a constant independent of $h$ and of the coefficients of the differential equations. {All constants involved in the analysis are dimensionless.}

\subsection{ The two-field Stokes eigenproblem}
\label{subsec:f2prst}

Let $\Omega$ be bounded and polyhedral. The two-field Stokes eigenvalue problem is as follows: find $[\u, p, \lambda]$, where $\u\neq {\bf 0}$ is the displacement or velocity field, $p$ is the pressure, and $\lambda \in \mathbb{R}$, such that 
\begin{align}
\label{eq:fevp2}
\begin{cases}
  -\mu \Delta \u + \nabla p = \lambda \u \quad & \hbox{in}~\Omega,  \\
  \nabla \cdot \u =0 \quad & \hbox{in}~\Omega, \\
	\u= \bm {0} \quad & \hbox{on}~\partial\Omega, 
\end{cases}
\end{align}
where $\mu > 0$ is a physical parameter. 
The weak form of problem \Eq{fevp2} is obtained in the  functional spaces  ${\cal V}=(H^1_0(\Omega))^d$ and ${\cal Q}=L^2(\Omega)/\mathbb{R}$. Setting  ${\cal X}_{\text{I}}={\cal V} \times {\cal Q}$, this weak form can be written as: find  $[\u,p] \in {\cal X}_{\text{I}}$ and $\lambda \in \mathbb{R}$ such that 
\begin{align}
\label{eq:fevp2w}
B_{\text{I}}([\u,p],[\vt,q]) = \lambda (\u,\vt)  \quad \forall  [\vt,q] \in  {\cal X}_{\text{I}},
\end{align}
where 
\begin{align}
B_{\text{I}}([\u,p],[\vt,q]) = \mu  (\nabla \u, \nabla \vt) - (p, \nabla \cdot \vt )  + (q, \nabla \cdot \u).  
\end{align}

It is well known that the inf-sup condition holds for the continuous problem \Eq{fevp2w}, {and the corresponding solution operator is compact.} From the spectral theory (\cite{Babuskaosborn1991}) it follows that \Eq{fevp2w} has a sequence of real eigenvalues (see also \cite{armentano2014,liu2013,B-giraultr})
\begin{align}
0 <    \lambda_1 \leq  \lambda_2  \leq  \ldots \lambda_k \ldots \leq \lim_{k \to \infty}\lambda_k =  \infty , \nonumber
\end{align}
and corresponding eigenfunctions 
\begin{align}
 [\u_1, p_1], [\u_2, p_2], \ldots,  [\u_k, p_k],   \ldots  \nonumber
\end{align}
which are assumed to satisfy 
\begin{align}
 (\u_i, \u_j) = \delta_{ij}, \quad i,j=1,2,\ldots  \nonumber
\end{align}

The standard Galerkin approximation of the variational problem can be constructed on conforming finite element spaces ${\cal V}_h \subset {\cal V}$ and ${\cal Q}_h \subset {\cal Q}$. The discrete version of problem \Eq{fevp2w} is given as follows: find  $[\u_h,p_h] \in {\cal X}_{\text{I},h} = {\cal V}_h\times {\cal Q}_h$ and $\lambda_h \in \mathbb{R}$ such that 
\begin{align}
\label{eq:fevp2w_h}
B_{\text{I}}([\u_h,p_h],[\vt_h,q_h]) = \lambda_h (\u_h,\vt_h)   \quad \forall  [\vt_h,q_h] \in  {\cal X}_{\text{I},h}.
\end{align}
{The restriction in the possible choices for the displacement and pressure spaces dictated by the inf-sup condition motivates the use of a stabilization technique to solve this problem.} The stabilized finite element formulation adopted in this paper has its roots in the variational multiscale formulation, where the continuous space ${\cal X}_{\text{I}}$ of the problem is approximated by  ${\cal X}_{\text{I},h}  \oplus \tilde{{\cal X}_{\text{I}}} $, $\tilde{{\cal X}_{\text{I}}}$ being an approximation to the complement of ${\cal X}_{\text{I},h}$ in ${\cal X}_{\text{I}}$.  

In our study, we select $\tilde{{\cal X}_{\text{I}}}$ to be approximately orthogonal to ${\cal X}_{\text{I},h}$ leading to the so-called method of orthogonal subscales \cite{codina10,codinab1,codina7}. The resulting simplified stabilized method for problem \Eq{fevp2w_h} that we shall use reads: find  $[\u_h,p_h] \in {\cal X}_{\text{I},h}$ and $\lambda_h \in \mathbb{R}$ such that 
\begin{align}
\label{eq:fevp2w_hs}
B_{\text{IS}}([\u_h,p_h],[\vt_h,q_h]) = \lambda_h (\u_h,\vt_h)   \quad \forall  [\vt_h,q_h] \in  {\cal X}_{\text{I},h},
\end{align}
where $B_{\text{IS}}([\u_h,p_h],[\vt_h,q_h])$ is defined as
\begin{align}
\label{eq:BIS}
B_{\text{IS}}([\u_h,p_h],[\vt_h,q_h]) & =B_{I}([\u_h,p_h],[\vt_h,q_h]) \nonumber \\ &+ \sum_{K} \alpha_{1K} ( P^\bot(\nabla p_h), P^\bot(\nabla q_h))_K \nonumber \\ &+  \alpha_2 ( P^\bot(\nabla \cdot \vt_h), P^\bot(\nabla \cdot \u_h)).
\end{align}
$\alpha_{1K}$ and $\alpha_{2}$ are  the stabilization parameters, which are computed as $$\alpha_{1K}=\frac{h_K^2}{ \mu }c_1, \quad \alpha_{2} = c_2 \mu $$ where $c_1$ and $c_2$ are numerical constants (see \cite{codina7} for more details on the method and the stabilization parameters). 
In the implementation of the method, a term of the form $( P^\bot (f_h), P^\bot(g_h))$ is computed as $( f_h , g_h-P(g_h))$ where the projection onto the appropriate finite element space $P(g_h)$ can either be treated implicitly or in an iterative way. 

\begin{remark}
Let us remark that in the design of the stabilization parameters one could take into account the eigenvalue, considering that $\lambda\u$ is a reactive-like term (see \cite{codina4}). The effect of neglecting it is that the estimates to be obtained will not be uniform in terms of the magnitude of the eigenvalue, but this is the same situation encountered in any Galerkin approximation of an eigenproblem, including the inf-sup stable approximation of the Stokes problem.
\end{remark}

\subsection{The three-field Stokes eigenproblem}
\label{subsec:f3prst}

The three-field Stokes eigenvalue problem is written as follows: find $[\u, p, \st]$, and  $\lambda \in \mathbb{R}$  such that 
\begin{align}
\label{eq:fevp3}
\begin{cases}
  -\nabla \cdot \st + \nabla p = \lambda \u \quad & \hbox{in}~\Omega,  \\
  \nabla \cdot \u =0 \quad & \hbox{in}~\Omega, \\
	\frac{1}{2\mu} \st- \nabla^S \u=\bm {0} \quad & \hbox{in}~\Omega,  \\
	\u= \bm {0} \quad & \hbox{on}~\partial\Omega, 
\end{cases}
\end{align}
where $\u\neq {\bf 0}$ is the displacement field, $p$ is the pressure, $\st$ is the deviatoric component of the stress field and $\nabla^S\u$ is the symmetrical part of $\nabla\u$. To write the weak form of problem \Eq{fevp3}, in addition to the 
functional spaces  ${\cal V}=(H^1_0(\Omega))^d$ and ${\cal Q}=L^2(\Omega)/\mathbb{R}$, we define   ${\cal T} = (L^2(\Omega) )^d_{\text{sym}}$ as the space of symmetric tensors of second order with square-integrable components. If we now let ${\cal X}_{\text{II}}={\cal V} \times {\cal Q} \times  {\cal T} $, the weak form of the problem can be stated as the follows: find  $[\u,p,\st] \in {\cal X}_{\text{II}}$ and $\lambda \in \mathbb{R}$ such that 
\begin{align}
\label{eq:fevp3w_v}
B_{\text{II}}([\u,p,\st],[\vt,q,\tt]) = \lambda (\u,\vt)   \quad \forall  [\vt,q, \tt] \in  {\cal X}_{\text{II}},
\end{align}
where 
\begin{align}
B_{\text{II}}([\u,p,\st],[\vt,q,\tt]) &=  (\nabla^S \vt, \st) - (p, \nabla \cdot \vt )  + (q, \nabla \cdot \u )  \nonumber\\ &+ \frac{1}{2\mu} (\st,\tt) - (\nabla^S \u, \tt). 
\end{align}

The Galerkin finite element approximation is  obtained in the usual way, by building the conforming finite element spaces ${\cal V}_h \subset {\cal V}$,  ${\cal Q}_h \subset {\cal Q}$ and ${\cal T}_h \subset {\cal T}$.  If we let ${\cal X}_{\text{II},h}={\cal V}_h \times {\cal Q}_h  \times {\cal T}_h $, the problem is now: find $[\u_h,p_h,\st_h] \in {\cal X}_{\text{II},h}$ and $\lambda_h \in \mathbb{R}$ such that 
\begin{align}
\label{eq:fevp3w_h}
B_{\text{II}}([\u_h,p_h,\st_h],[\vt_h,q_h,\tt_h]) = \lambda (\u_h,\vt_h)   \quad \forall  [\vt_h,q_h,\tt_h] \in  {\cal X}_{\text{II},h}.
\end{align}

It is obviously seen that the bilinear form $B_{\text{II}}([\u_h,p_h,\st_h],[\vt_h,q_h,\tt_h])$ is not coercive and the inf-sup condition is not satisfied unless some stringent requirements are posed on the choice of the finite element spaces. Thus, like for the two-field formulation, the purpose of the stabilization used is to avoid the use of the inf-sup conditions and, in particular, to allow equal interpolations for all the unknowns.  The same strategy as before is followed to obtain the stabilized finite element formulation, and the method becomes: find  $[\u_h, p_h, \st_h] \in {\cal X}_{\text{II},h}$ and $\lambda_h \in \mathbb{R}$ such that 
\begin{align}
\label{eq:fevp3w_hs}
B_{\text{IIS}}([\u_h,p_h,\st_h],[\vt_h,q_h,\tt_h]) = \lambda_h (\u_h,\vt_h)   \quad \forall  [\vt_h,q_h, \tt_h] \in  {\cal X}_{\text{II},h},
\end{align}
where $B_{\text{IIS}}([\u_h,p_h,\st_h],[\vt_h,q_h,\tt_h])$ is given as
\begin{align}
& B_{\text{IIS}}([\u_h,p_h,\st_h],[\vt_h,q_h,\tt_h]) = B_{\text{II}}([\u_h,p_h,\st_h],[\vt_h,q_h,\tt_h])  \nonumber\\
& \qquad + \alpha_3 ( P^\bot(\nabla^S \vt_h), P^\bot(\nabla^S \u_h))  
+ \alpha_4 ( P^\bot(\nabla \cdot \vt_h), P^\bot(\nabla \cdot \u_h))   \nonumber\\
& \qquad + \sum_{K} \alpha_{5K} ( P^\bot(\nabla q_h-\nabla \cdot \tt_h), P^\bot(\nabla p_h-\nabla \cdot \st_h))_K .  \label{eq:BIIS}
\end{align}

The stabilization parameters of this formulation are given by $$\alpha_3=2\mu c_3, \quad \alpha_4=2\mu c_4, \quad  \alpha_{5K}= \frac{h_K^2}{\mu } c_5 $$ where $c_3$, $c_4$ and  $c_5$ are numerical constants which can be taken in a wide range, as the analysis in \cite{codina13} put forth. In this paper we consider that the finite element spaces are built using equal continuous interpolation, although the extension to more general spaces, and in particular of discontinuous stresses and pressures, can be done as analyzed  in \cite{codina13}. 

\section{Numerical analysis of the source problems}
\label{sec:analysis}

As we have mentioned earlier, we aim to prove that the eigensolutions of the stabilized two-field and three-field Stokes problems converge to the solutions of the corresponding spectral problems by applying the classical spectral approximation theory presented in \cite{Babuskaosborn1991} to the associated source problems. To achieve this, we will present in this section the source problems for the two-field and three-field cases, and the essential stability and convergence results. 
At this point, let us introduce the notation for the interpolation estimates that will allow us to define the error functions of the methods.  
For any  $v \in H^{k'_v+1}(\Omega)$, $k_v$ being the degree of an approximating finite element space ${\cal W}_h $, the interpolation errors $\varepsilon_i(v)$, $i=0,1$, are derived from the interpolation estimates as
\begin{align}
\label{eq:interestgen}
\varepsilon_i(v)= h^{k''_v+1-i}\sum_{K} \Vert v\Vert_{k''_v+1,K},
\end{align} with
\begin{align}
   \inf_{v_h \in  {\cal W}_h}  \sum_{K} \Vert v-v_h\Vert_{i,K}  \lesssim \varepsilon_i(v),
\end{align}where $k''_v=\min(k_v,k'_v)$. Here we use $v$ to represent the unknown $\u$, $\st$ or $p$, and $k_v$ denotes the corresponding order of interpolation for each $v$.
In the results given below, we will define the error functions  based on these definitions.

\subsection{The two-field source problem}
\label{subsubsec:stabconv2}

The source Stokes problem  for the two-field case can be written as: given $\f \in (L^2(\Omega))^d$, find $[\u,p] \in {\cal X}_{\text{I}}$ such that
\begin{align}
\label{eq:sourcev2}
B_{\text{I}}([\u,p],[\vt,q]) = (\f,\vt)   \quad \forall  [\vt,q] \in  {\cal X}_{\text{I}}.
\end{align}
The corresponding stabilized formulation can be written as: find $[\u_h,p_h] \in {\cal X}_{\text{I,h}}$ such that
\begin{align}
\label{eq:sourcestab2}
B_{\text{IS}}([\u_h,p_h],[\vt_h,q_h]) = (\f,\vt_h)  \quad \forall [\vt_h,q_h] \in  {\cal X}_{\text{I},h},
\end{align}
where $B_{\text{IS}}$ is defined in \Eq{BIS}. 

The stability and convergence properties of the method used in \Eq{sourcestab2} are analyzed in \cite{codina10} and \cite{codinab1}, and the following theorem is a collection of immediate consequences of the results obtained therein:

\begin{theorem}
\label{th:thc_2fs}
The solution  of \Eq{sourcestab2} satisfies the stability condition
\begin{align}
\sqrt{\mu} \Vert \u_h \Vert_{1}  + \frac{1}{\sqrt{\mu}}\Vert p_h  \Vert  \lesssim \frac{1}{\sqrt{\mu}}  \Vert \f \Vert_{-1}.
\end{align}
Moreover, if the solution of the continuous problem has enough regularity, then the solution of \Eq{sourcestab2}  has the following optimal order of convergence
\begin{align}
\sqrt{\mu} \Vert \u - \u_h \Vert_{1} + \frac{1}{\sqrt{\mu}}\Vert p- p_h  \Vert  \lesssim \varepsilon_{\text{\rm I}}(h), 
\end{align} where $\varepsilon_{\text{\rm I}}(h)$ is the interpolation error given by
\begin{align}
\label{eq:Eh2f}
\varepsilon_{\text{\rm I}}(h)=\sqrt{\mu} \varepsilon_1(\u)+ \frac{1}{\sqrt{\mu}}\varepsilon_0(p). 
\end{align} 
\boxed{}
\end{theorem}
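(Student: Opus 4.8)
The plan is to deduce both assertions from a single discrete inf--sup condition for $B_{\text{IS}}$ in the mesh-dependent norm
$\vtresh{[\vt_h,q_h]}^2 := \mu\Vert\vt_h\Vert_1^2 + \tfrac{1}{\mu}\Vert q_h\Vert^2 + \sum_K\alpha_{1K}\Vert P^\bot(\nabla q_h)\Vert_K^2 + \alpha_2\Vert P^\bot(\nabla\cdot\vt_h)\Vert^2$,
whose first two terms reproduce the quantities on the left-hand sides of the theorem. First I would record the diagonal coercivity: because the stabilization is built on $P^\bot$, the pressure--divergence coupling in $B_{\text{I}}$ cancels when one tests with $[\vt_h,q_h]=[\u_h,p_h]$, giving $B_{\text{IS}}([\u_h,p_h],[\u_h,p_h]) = \mu\Vert\nabla\u_h\Vert^2 + \sum_K\alpha_{1K}\Vert P^\bot(\nabla p_h)\Vert_K^2 + \alpha_2\Vert P^\bot(\nabla\cdot\u_h)\Vert^2 =: D$. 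This controls the velocity gradient and the two stabilized projections, but not the full pressure norm.

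The crux is to recover $\tfrac{1}{\mu}\Vert p_h\Vert^2$. I would use a Verf\"urth-type inequality, valid for equal-order continuous interpolation, bounding $\Vert p_h\Vert$ by $\sup_{\vt_h\in\mathcal{V}_h}(p_h,\nabla\cdot\vt_h)/\Vert\vt_h\Vert_1$ plus $\big(\sum_K h_K^2\Vert\nabla p_h\Vert_K^2\big)^{1/2}$. Splitting $\nabla p_h = P(\nabla p_h)+P^\bot(\nabla p_h)$, the orthogonal part of the last term is exactly the pressure stabilization in $D$ (up to the definition of $\alpha_{1K}$), while the projectable part $P(\nabla p_h)$, being a finite element field, is reabsorbed into the supremum via an inverse inequality. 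Finally, writing $(p_h,\nabla\cdot\vt_h)$ through $B_{\text{IS}}([\u_h,p_h],[\vt_h,0])$ expresses the supremum in terms of the viscous and divergence contributions already controlled by $D$ together with the action of $B_{\text{IS}}$. Collecting these estimates yields $\vtresh{[\u_h,p_h]}\lesssim\sup_{[\vt_h,q_h]\neq 0}B_{\text{IS}}([\u_h,p_h],[\vt_h,q_h])/\vtresh{[\vt_h,q_h]}$. The stability bound is then immediate from the source equation \Eq{sourcestab2}, since $\sup(\f,\vt_h)/\vtresh{[\vt_h,q_h]}\lesssim\tfrac{1}{\sqrt\mu}\Vert\f\Vert_{-1}$.

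For the error estimate I would exploit the weak consistency of the orthogonal-subscale form. Inserting the continuous solution and using $\nabla\cdot\u=0$, the divergence stabilization vanishes and one is left with $B_{\text{IS}}([\u-\u_h,p-p_h],[\vt_h,q_h]) = \sum_K\alpha_{1K}(P^\bot(\nabla p),P^\bot(\nabla q_h))_K$; this residual is high order because $P^\bot(\nabla p)=\nabla p-P(\nabla p)$ is an interpolation remainder of a smooth field. I would then split $[\u-\u_h,p-p_h]$ into an interpolation part and a discrete part $[\bm{e}_h,\eta_h]\in\mathcal{X}_{\text{I},h}$, apply the inf--sup to $[\bm{e}_h,\eta_h]$, bound $B_{\text{IS}}([\bm{e}_h,\eta_h],\cdot)$ by the continuity of $B_{\text{IS}}$ in $\vtresh{\cdot}$ acting on the interpolation error plus the consistency term, and control every contribution through the interpolation estimates \Eq{interestgen} and the values of $\alpha_{1K},\alpha_2$. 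A triangle inequality closes the estimate with right-hand side $\varepsilon_{\text{I}}(h)=\sqrt\mu\,\varepsilon_1(\u)+\tfrac{1}{\sqrt\mu}\varepsilon_0(p)$.

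The main obstacle is the pressure control inside the inf--sup: since only $P^\bot(\nabla p_h)$ is directly bounded, the argument must be arranged so that the uncontrolled component $P(\nabla p_h)$ is never estimated in isolation but is recovered through the continuous surjectivity of the divergence and the inverse inequality. Equally delicate is the bookkeeping of the $\mu$-weights, so that the balance $\sqrt\mu\Vert\cdot\Vert_1$ versus $\tfrac{1}{\sqrt\mu}\Vert\cdot\Vert$ encoded in $\vtresh{\cdot}$ is preserved at every step and the final constants are independent of both $h$ and $\mu$.
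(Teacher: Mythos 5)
The paper itself offers no proof of this theorem: it is presented as ``a collection of immediate consequences'' of the analyses in \cite{codina10} and \cite{codinab1}, and the statement is left unproved in the text. So your sketch cannot be matched against an in-paper argument; what it does is reconstruct the proof contained in those references, and it does so correctly and along essentially the same lines they follow. There, as in your plan, stability is obtained as a discrete inf-sup condition for $B_{\text{IS}}$ in a mesh-dependent norm, by combining the diagonal (coercive) terms with a Verf\"urth-type argument that recovers $\frac{1}{\sqrt\mu}\Vert p_h\Vert$ from the continuous inf-sup condition, the stabilization term, and the momentum equation; convergence then follows from the weak-consistency identity $B_{\text{IS}}([\u-\u_h,p-p_h],[\vt_h,q_h])=\sum_K\alpha_{1K}(P^\bot(\nabla p),P^\bot(\nabla q_h))_K$ (valid since $\nabla\cdot\u=0$), which is of optimal order because $P^\bot(\nabla p)$ is an approximation remainder, together with interpolation estimates and the inf-sup applied to the discrete part of the error. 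Two details deserve care in a full write-up. First, the cancellation of the coupling terms in the diagonal test $[\vt_h,q_h]=[\u_h,p_h]$ has nothing to do with $P^\bot$: it is the skew-symmetry of $-(p,\nabla\cdot\vt)+(q,\nabla\cdot\u)$ in $B_{\text{I}}$. Second, absorbing the component $P(\nabla p_h)$ into the supremum by testing with a rescaling of it requires that function to be an admissible velocity test function, i.e.\ to belong to ${\cal V}_h\subset (H^1_0(\Omega))^d$; whether $P$ projects onto a space incorporating the boundary conditions, or a cut-off near $\partial\Omega$ is needed, is precisely the kind of technical point the cited analyses handle explicitly. Neither issue invalidates your argument; they mark where the sketch glosses over work that the references carry out in detail.
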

It is important to note that as the definition of $\varepsilon_{\text{I}}(h)$ suggests, the interpolation error is of order $k''$ in terms  of $h$, where  $k''=\min(k''_u,k''_p+1)$. 

Now, for reasons that will become obvious in the convergence theory given in Section \ref{subsec:stabconv2e}, we state and prove the following theorem, which asserts the $L^2$-error estimate for the displacement:

\begin{theorem}
\label{th:th2}
If the continuous problem \Eq{sourcev2} satisfies the regularity condition
\begin{align}
\label{eq:assumpthm1}
 \sqrt{\mu} \Vert \u \Vert_{2} + \frac{1}{\sqrt{\mu}}\Vert p  \Vert_{1} \lesssim \frac{1}{\sqrt{\mu}}  \Vert \f \Vert,
\end{align}
then
\begin{align}
 \sqrt{\mu} \Vert \u-\u_h \Vert  \lesssim h^{2} \left( \sqrt{\mu} \Vert \u \Vert_{2}   + \frac{1}{\sqrt{\mu}}\Vert p  \Vert_{1}  \right).
\end{align}
\end{theorem}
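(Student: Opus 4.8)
The natural approach is an Aubin--Nitsche duality argument, exploiting the fact that the Stokes operator is, up to the sign of the pressure, self-adjoint, so that the adjoint source problem is again a Stokes problem to which the regularity assumption \Eq{assumpthm1} applies. Write $\eru = \u - \u_h$ and $e_p = p - p_h$. First I would introduce the dual problem: find $[\bm\varphi, \xi] \in {\cal X}_{\text{I}}$ such that
\begin{align}
B_{\text{I}}([\vt, q], [\bm\varphi, \xi]) = (\eru, \vt) \quad \forall [\vt, q] \in {\cal X}_{\text{I}}.
\end{align}
In strong form this is a Stokes system with right-hand side $\eru$ and a divergence-free constraint, so \Eq{assumpthm1} applied with the dual data $\eru$ in the role of $\f$ yields the regularity bound $\sqrt\mu\,\Vert\bm\varphi\Vert_2 + \frac{1}{\sqrt\mu}\Vert\xi\Vert_1 \lesssim \frac{1}{\sqrt\mu}\Vert\eru\Vert$.

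Choosing the test pair $[\vt, q] = [\eru, e_p]$ in the dual problem gives the representation $\Vert\eru\Vert^2 = B_{\text{I}}([\eru, e_p], [\bm\varphi, \xi])$. The plan is then to split $[\bm\varphi, \xi]$ as $([\bm\varphi, \xi] - [\bm\varphi_h, \xi_h]) + [\bm\varphi_h, \xi_h]$, where $[\bm\varphi_h, \xi_h] \in {\cal X}_{\text{I},h}$ is a finite element interpolant of the dual solution. On the first, non-discrete, part I would use the continuity of $B_{\text{I}}$ together with the energy error estimate of Theorem~\ref{th:thc_2fs} for the primal error and the interpolation estimates \Eq{interestgen} for the dual solution; by the dual regularity this contributes a factor of order $h\cdot\frac{1}{\sqrt\mu}\Vert\eru\Vert$ multiplying the primal energy error $\varepsilon_{\text{I}}(h)$, hence two powers of $h$ in total.

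For the discrete part $B_{\text{I}}([\eru, e_p], [\bm\varphi_h, \xi_h])$ I would use the error equation. Subtracting the stabilized discrete problem \Eq{sourcestab2} from the continuous one \Eq{sourcev2}, both tested against $[\bm\varphi_h, \xi_h]$, gives
\begin{align}
B_{\text{I}}([\eru, e_p], [\bm\varphi_h, \xi_h]) = & \sum_K \alpha_{1K}\,(P^\bot(\nabla p_h), P^\bot(\nabla\xi_h))_K \nonumber \\ & + \alpha_2\,(P^\bot(\nabla\cdot\bm\varphi_h), P^\bot(\nabla\cdot\u_h)),
\end{align}
so that the lack of Galerkin orthogonality is measured exactly by the stabilization terms. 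This is the crux of the argument and the step I expect to be the main obstacle. It is resolved through the two features of the orthogonal subscale method. Since the exact velocity satisfies $\nabla\cdot\u = 0$, writing $\u_h = \u - \eru$ and $p_h = p - e_p$ shows that the divergence stabilization is governed purely by $P^\bot(\nabla\cdot\eru)$, while the pressure stabilization splits into a part controlled by $P^\bot(\nabla e_p)$ and a consistency part $\sum_K\alpha_{1K}(P^\bot(\nabla p), P^\bot(\nabla\xi_h))_K$. Bounding each factor by Cauchy--Schwarz and using $\alpha_{1K} = c_1 h_K^2/\mu$ and $\alpha_2 = c_2\mu$, each stabilization contribution either carries a factor $h/\sqrt\mu$ from $\alpha_{1K}^{1/2}$ or is controlled by the primal energy error of Theorem~\ref{th:thc_2fs}, whereas the dual factors $P^\bot(\nabla\xi_h)$ and $P^\bot(\nabla\cdot\bm\varphi_h)$ are estimated through interpolation and inverse estimates by a further power of $h$ times $\frac{1}{\sqrt\mu}\Vert\eru\Vert$.

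Collecting the two parts, every term is bounded by $h^2\bigl(\sqrt\mu\,\Vert\u\Vert_2 + \frac{1}{\sqrt\mu}\Vert p\Vert_1\bigr)\cdot\frac{1}{\sqrt\mu}\Vert\eru\Vert$; dividing the resulting inequality $\Vert\eru\Vert^2 \lesssim (\cdots)\,\Vert\eru\Vert$ by $\Vert\eru\Vert$ and multiplying by $\sqrt\mu$ yields the claimed second-order estimate. The delicate point throughout is the careful bookkeeping of the powers of $\mu$ and $h$, ensuring that the stabilization parameters supply precisely the extra factor of $h$ needed to upgrade the energy estimate to the $L^2$ estimate, and that the orthogonal projection together with the incompressibility constraint removes the otherwise obstructing divergence consistency term.
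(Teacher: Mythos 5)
Your proposal is correct and is essentially the paper's own argument: an Aubin--Nitsche duality with the adjoint Stokes problem (to which the regularity assumption \Eq{assumpthm1} applies), an interpolant of the dual solution, the error equation showing that the stabilization terms are the sole violation of Galerkin orthogonality, and the two structural facts --- $\nabla\cdot\u=0$ killing the divergence consistency term and the divergence-free dual velocity making the remaining divergence term small --- combined with the scalings of $\alpha_{1K}$ and $\alpha_2$ and the energy estimate of Theorem~\ref{th:thc_2fs}. The only difference is organizational (you split the dual solution into interpolant plus remainder at the level of $B_{\text{I}}$, while the paper first rewrites $B_{\text{I}}$ as $B_{\text{IS}}$ minus stabilization terms and works with the dual problem scaled by $\mu/\ell^2$), and your bookkeeping of the powers of $h$ and $\mu$ reaches the same bound.
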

\begin{proof} The proof is carried out by using a duality argument. To do this, we let $[\wt,\pi] \in {\cal X}_{\text{I}}$ and consider the following adjoint problem: 
\begin{align}
\label{eq:fevp2adj}
\begin{cases}
  -\mu \Delta \wt - \nabla \pi = \frac{\mu}{\ell^2} (\u-\u_h) \quad & \hbox{in}~\Omega,  \\
  -\nabla \cdot \wt =0 \quad & \hbox{in}~\Omega, \\
	\wt= \bm {0} \quad & \hbox{on}~\partial\Omega, 
\end{cases}
\end{align}
where $\ell$ is a characteristic length introduced to maintain the dimensional consistency of the problem. The next step is to test the first and second equations in \Eq{fevp2adj} respectively with $\u-\u_h$ and $p-p_h $. Then we have
\begin{align}
\label{eq:eql3}
\frac{\mu}{\ell^2} \Vert \u - \u_h \Vert^2 &= \mu  (\nabla \wt, \nabla (\u-\u_h)) + (\pi, \nabla \cdot (\u-\u_h))  - (p-p_h, \nabla \cdot \wt ) \cl
&= B_{\text{I}}([\u-\u_h,p-p_h], [\wt,\pi])  \nonumber\\
&= B_{\text{IS}}([\u-\u_h,p-p_h], [\wt,\pi]) \nonumber\\
& - \sum_{K} \alpha_{1K} ( P^\bot(\nabla \pi), P^\bot(\nabla (p-p_h)))_K \nonumber\\
& - \alpha_2 \sum_{K}( P^\bot(\nabla \cdot \wt), P^\bot(\nabla \cdot (\u-\u_h)))_K,
\end{align}
where we have used the definition of $B_{\text{IS}}$. The last term in the expression above vanishes because $\nabla\cdot\wt = 0$, and thus only the first two terms on the right-hand-side of the last equality must be bounded. Now if we let $[\tilde{\wt}_h,\tilde{\pi}_h]$ be the best approximation to $[{\wt},{\pi}]$ in ${\cal X}_{\text{I},h}$, the first of these terms can be bounded using the consistency error coming from $B_{\text{IS}}([\u-\u_h,p-p_h],[\tilde{\wt}_h,\tilde{\pi}_h])$ as follows. Since $[\u,p]$ is the solution of the continuous problem \Eq{sourcev2}, we have
\begin{align}
\label{eq:biscontf}
B_{\text{IS}}([\u,p],[{\tilde\wt}_h,{\tilde\pi}_h]) = (\f,{\tilde\wt}_h) +\sum_{K} \alpha_{1K} ( P^\bot(\nabla p), P^\bot(\nabla {\tilde\pi}_h))_K  \nonumber \\ +\alpha_2 ( P^\bot(\nabla \cdot \u), P^\bot(\nabla \cdot {\tilde\wt}_h)).
\end{align}
Making use of  \Eq{sourcestab2} in \Eq{biscontf} and noting that $\nabla\cdot\u = 0$ yields
\begin{align}
B_{\text{IS}}([\u-\u_h,p-p_h],[{\tilde\wt}_h,{\tilde\pi}_h])  
 \lesssim \frac{h^2}{\mu} \Vert \pi\Vert_1 \Vert p\Vert_1,\el
\end{align}
where we have made use of the $H^1$-stability of the best interpolation and the expression of $\alpha_{1K}$. This same bound clearly applies to the second term in the right-hand-side of \Eq{eql3}. Regarding the stabilizing terms applied to $([\u-\u_h,p-p_h],[\wt-{\tilde\wt}_h,\pi - {\tilde\pi}_h])$, we have that
\begin{align}
& \sum_K \alpha_{1K} (P^\bot(p-p_h),P^\bot(\pi-{\tilde\pi}_h))_K\cl
& \qquad + \alpha_2 (P^\bot(\nabla\cdot\u-\nabla\cdot\u_h),P^\bot(\nabla\cdot\wt-\nabla\cdot{\tilde\wt}_h))\cl
& \qquad \lesssim \frac{h^2}{\mu} \Vert \pi\Vert_1 \Vert p\Vert_1
+ \mu \Vert \u - \u_h\Vert_1 \Vert \wt - \tilde{\wt}_h\Vert_1 \cl
& \qquad \lesssim \frac{h^2}{\mu} \Vert \pi\Vert_1 \Vert p\Vert_1 + \mu h^2 \Vert\u\Vert_2\Vert \wt\Vert_2.\el
\end{align}
On the other hand, it is easily seen that
\begin{align}
& B_{\text{I}}([\u-\u_h,p-p_h],[\wt-{\tilde\wt}_h,\pi - {\tilde\pi}_h])  \cl
& \qquad \lesssim \mu \Vert \nabla \u - \nabla \u_h \Vert h \Vert \wt\Vert_2 + \Vert p - p_h\Vert h \Vert \wt\Vert_2
 + \Vert \nabla\cdot\u - \nabla\cdot\u_h\Vert h \Vert \pi \Vert_1 \cl
& \qquad \lesssim h^2 \mu \Vert \u\Vert_2\Vert \wt\Vert_2 + h^2\Vert p\Vert_1\Vert \wt\Vert_2
+ h^2 \Vert \u\Vert_2 \Vert \pi\Vert_1.\el
\end{align}
Collecting the bounds just obtained and using them in \Eq{eql3} yields
\begin{align}
& \frac{\mu}{\ell^2} \Vert \u - \u_h \Vert^2   \lesssim
h^2 \mu \Vert \u\Vert_2\Vert \wt\Vert_2 + h^2\Vert p\Vert_1\Vert \wt\Vert_2
+ h^2\Vert \u\Vert_2 \Vert \pi\Vert_1
+ \frac{h^2}{\mu} \Vert \pi\Vert_1 \Vert p\Vert_1.\el
\end{align}
From the elliptic regularity assumption we have that
\begin{align*}
\Vert \wt \Vert_2 \lesssim \frac{1}{\ell^2} \Vert \u-\u_h \Vert, \quad 
\Vert \pi \Vert_1 \lesssim   \frac{\mu}{\ell^2} \Vert \u-\u_h \Vert,
\end{align*}
which when used in the previous bound yields the theorem.
\end{proof}
 
\subsection{The three-field source problem}
\label{subsubsec:stabconv3}

The three-field Stokes source problem can be written as: given $\f\in (L^2(\Omega))^d$,  seek $[\u,p,\st] \in {\cal X}_{\text{II}}$  such that  
\begin{align}
\label{eq:sourcev3}
B_{\text{II}}([\u,p,\st],[\vt,q,\tt]) = (\f,\vt)  \quad \forall  [\vt,q,\tt]\in  {\cal X}_{\text{II}}, 
\end{align}
and the stabilized formulation is: find $[\u_h,p_h,\st_h] \in {\cal X}_{\text{II,h}}$ such that
\begin{align}
\label{eq:sourcestab3}
B_{\text{IIS}}([\u_h,p_h,\st_h],[\vt_h,q_h,\tt_h]) = (\f,\vt_h) \quad \forall  [\vt_h,q_h,\tt_h] \in  {\cal X}_{\text{II},h},
\end{align}
where $B_{\text{IIS}}$ is defined in \Eq{BIIS}. 

The following theorem, which is proved in \cite{codina13}, asserts the stability and convergence of the finite element solution:
\begin{theorem}
\label{th:thc_45}
The   solution  of \Eq{sourcestab3} can be  bounded as
\begin{align}
\sqrt{\mu} \Vert \u_h \Vert_{1} +\frac{1}{\sqrt{\mu}}\Vert \st_h  \Vert + \frac{1}{\sqrt{\mu}}\Vert p_h  \Vert  \lesssim \frac{1}{\sqrt\mu}  \Vert \f \Vert_{-1}.
\end{align}
Moreover, if the solution of the continuous problem has enough regularity,
\begin{align}
\sqrt{\mu} \Vert \u - \u_h \Vert_{1} +\frac{1}{\sqrt{\mu}}\Vert \st-\st_h  \Vert + \frac{1}{\sqrt{\mu}}\Vert p- p_h  \Vert  \lesssim \varepsilon_{\text{\rm II}}(h),
\end{align}
where  $\varepsilon_{\text{\rm II}}(h)$ is the interpolation error given by
\begin{align}
\label{eq:Eh}
\varepsilon_{\text{\rm II}}(h)=\sqrt{\mu} \varepsilon_1(\u)+\frac{1}{\sqrt{\mu}}\varepsilon_0(\st)+ \frac{1}{\sqrt{\mu}}\varepsilon_0(p).
\end{align}\boxed{}
\end{theorem}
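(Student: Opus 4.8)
The plan is to reconstruct both assertions by an inf-sup/Céa argument in the mesh-dependent working norm
\[
\trinorm{[\vt_h,q_h,\tt_h]}^2 := \mu\Vert\vt_h\Vert_1^2 + \frac{1}{\mu}\Vert\tt_h\Vert^2 + \frac{1}{\mu}\Vert q_h\Vert^2 ,
\]
in which the two claimed estimates are exactly the statements that the discrete solution operator is bounded and optimally convergent. Once a discrete inf-sup condition of the form $\trinorm{[\u_h,p_h,\st_h]}\lesssim\sup_{[\vt_h,q_h,\tt_h]\ne 0} B_{\text{IIS}}([\u_h,p_h,\st_h],[\vt_h,q_h,\tt_h])/\trinorm{[\vt_h,q_h,\tt_h]}$ is established, the stability bound follows immediately by pairing it with $(\f,\vt_h)\le\Vert\f\Vert_{-1}\Vert\vt_h\Vert_1$.

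To build the stabilizing test function I would proceed in three stages. First, testing $B_{\text{IIS}}$ with its own argument cancels the skew pairs of $B_{\text{II}}$ and leaves only the positive contributions
\begin{align*}
B_{\text{IIS}}([\u_h,p_h,\st_h],[\u_h,p_h,\st_h])
&= \tfrac{1}{2\mu}\Vert\st_h\Vert^2 + \alpha_3\Vert P^\bot(\nabla^S\u_h)\Vert^2 \\
&\quad + \alpha_4\Vert P^\bot(\nabla\cdot\u_h)\Vert^2 + \sum_K\alpha_{5K}\Vert P^\bot(\nabla p_h-\nabla\cdot\st_h)\Vert_K^2 ,
\end{align*}
so the $L^2$ norm of the stress and the orthogonal subscale seminorms are controlled for free. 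Second, to upgrade this to full control of $\Vert\u_h\Vert_1$ I would use the constitutive block of $B_{\text{II}}$: testing with $\tt_h=-P(\nabla^S\u_h)\in{\cal T}_h$ produces the resolved part $\Vert P(\nabla^S\u_h)\Vert^2$, which together with the orthogonal part from the first stage reconstitutes $\Vert\nabla^S\u_h\Vert$, whence Korn's and Poincaré's inequalities give $\Vert\u_h\Vert_1$; the cross term $\nabla\cdot\tt_h$ entering the $\alpha_{5K}$ contribution is absorbed using the already bounded quantities and a Young inequality.

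Third, and this I expect to be the main obstacle, one must control $\Vert p_h\Vert$. I would invoke the continuous velocity–pressure inf-sup condition to choose $\vt\in{\cal V}$ with $\nabla\cdot\vt=-p_h$ and $\Vert\vt\Vert_1\lesssim\Vert p_h\Vert$, and test with a Fortin interpolant $\vt_h$ of $\vt$, so that the velocity–pressure pairing $-(p_h,\nabla\cdot\vt_h)$ reproduces $\Vert p_h\Vert^2$. The difficulty specific to the three-field case is that the pressure gradient and the stress divergence are never separated in the formulation but appear only through the combination $\nabla p_h-\nabla\cdot\st_h$ inside the $\alpha_{5K}$ term; this is precisely where the orthogonal choice of subscales pays off, since the residual splits into a resolved part bounded by interpolation and an orthogonal part already estimated in the first stage, allowing the $\st_h$–$p_h$ coupling to be disentangled. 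Forming a linear combination of the three test functions with small, mesh-independent weights—so that the indefinite off-diagonal contributions are dominated by the positive diagonal terms after Young's inequality—yields a single test function realizing the inf-sup condition in $\trinorm{\cdot}$, which is the stability statement.

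For the convergence estimate I would run a standard Céa argument. The method is consistent because the exact triple $[\u,p,\st]$ satisfies the stabilized equations (the added terms act on the continuous residual, which vanishes), so the discrete error $[\u_h-I_h\u,\,p_h-I_h p,\,\st_h-I_h\st]$ is controlled by the inf-sup condition applied to the interpolation error. Combining the continuity of $B_{\text{IIS}}$ in $\trinorm{\cdot}$ with the interpolation estimates $\varepsilon_i$ then gives the bound $\varepsilon_{\text{II}}(h)$; the only nonroutine check is that the stabilization terms evaluated on the interpolation error scale correctly with $\alpha_3,\alpha_4,\alpha_{5K}$, which follows from their $2\mu$ and $h_K^2/\mu$ scalings together with the local bounds, so that optimality is preserved.
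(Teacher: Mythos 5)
The paper itself contains no proof of Theorem \ref{th:thc_45}: it is imported verbatim from \cite{codina13} (hence the empty proof box), so your reconstruction has to be measured against the analysis in that reference and against the two-field proofs the paper does write out (Theorems \ref{th:thc_2fs} and \ref{th:th2}). Your three-stage stability argument is indeed the right skeleton: test with the solution itself, recover $\mu\Vert\nabla^S\u_h\Vert^2$ by testing the constitutive block with a multiple of $-P(\nabla^S\u_h)$ while absorbing the $\nabla\cdot\tt_h$ cross term with an inverse inequality, and recover $\Vert p_h\Vert$ through the continuous inf-sup condition. But stage three, as you describe it, would not go through. There is no Fortin interpolant for equal-order continuous spaces: by Fortin's lemma its existence is equivalent to the discrete inf-sup condition, whose failure is the very reason the stabilization is introduced, so you cannot invoke one. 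Moreover, the ``resolved part'' $P(\nabla p_h-\nabla\cdot\st_h)$ is a discrete function and cannot be ``bounded by interpolation''; by an inverse estimate it can be as large as $h^{-1}$ times $L^2$ quantities. The device that makes this stage work is to take $\vt_h=P(\vt)$, the $L^2$ projection onto ${\cal V}_h$ (which is $H^1$-stable on the quasi-uniform meshes assumed in the paper): then $\vt-\vt_h$ is $L^2$-orthogonal to the finite element space, so after integrating $(p_h,\nabla\cdot(\vt-\vt_h))$ by parts the resolved component of $\nabla p_h-\nabla\cdot\st_h$ pairs to zero identically, and only $P^\bot(\nabla p_h-\nabla\cdot\st_h)$ survives, which the $\alpha_{5K}$ term controls.

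The genuine gap is in your convergence step. The assertion that the exact triple satisfies the discrete equations because ``the added terms act on the continuous residual, which vanishes'' is false, and it misses what the paper itself emphasizes as the distinguishing feature of the method: the stabilization in \Eq{BIIS} is term-by-term, not residual-based. For the exact solution one has $P^\bot(\nabla\cdot\u)=0$, but $P^\bot(\nabla^S\u)\neq 0$ and $P^\bot(\nabla p-\nabla\cdot\st)=P^\bot(\f)\neq 0$ in general, so $B_{\text{IIS}}([\u,p,\st],[\vt_h,q_h,\tt_h])\neq(\f,\vt_h)$, Galerkin orthogonality fails, and your C\'ea argument never gets started. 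The repair is exactly what the paper's own proof of Theorem \ref{th:th2} does in the two-field case, where the analogous terms $\sum_K\alpha_{1K}(P^\bot(\nabla p),P^\bot(\nabla\tilde\pi_h))_K$ and $\alpha_2(P^\bot(\nabla\cdot\u),P^\bot(\nabla\cdot\tilde\wt_h))$ are kept and estimated: one must bound the consistency error, using that $\Vert P^\bot(g)\Vert=\inf_{g_h}\Vert g-g_h\Vert\lesssim\varepsilon_0(g)$ for smooth $g$, together with the scalings $\alpha_3,\alpha_4\sim\mu$ and $\alpha_{5K}\sim h_K^2/\mu$, to show that it is itself of order $\varepsilon_{\text{II}}(h)$ and hence does not spoil optimality. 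With that consistency estimate supplied, and stage three repaired as above, your outline coincides with the cited analysis.
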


In the same way as in the previous section,  the definition of $\varepsilon_{\text{II}}(h)$ states that the interpolation error is of order $k''$ in terms of $h$, where $k''=\min(k''_u,k''_{\sigma}+1,k''_p+1)$ for this case. 
 
To complete the convergence analysis for the three-field source problem, below we include the theorem  stated and proved in~\cite{codina13}, which provides an  $L^2$-error estimate for the displacement:

\begin{theorem}
\label{th:th3fL2}
If the continuous three-field source problem satisfies the regularity condition
\begin{align}
\label{eq:assumpthm2}
 \sqrt{\mu} \Vert \u \Vert_{2} +\frac{1}{\sqrt{\mu}}\Vert \st  \Vert_{1}+ \frac{1}{\sqrt{\mu}}\Vert p  \Vert_{1} \lesssim \frac{1}{\sqrt{\mu}}  \Vert \f \Vert,
\end{align}
then
\begin{align}
 \sqrt{\mu} \Vert \u-\u_h \Vert  \lesssim h^{2} \left( \sqrt{\mu} \Vert \u \Vert_{2} +\frac{1}{\sqrt{\mu}}\Vert \st \Vert_{1}+ \frac{1}{\sqrt{\mu}}\Vert p  \Vert_{1}  \right).
\end{align}
\boxed{}
\end{theorem}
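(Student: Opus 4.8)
The plan is to transpose to the three-field setting the Aubin--Nitsche duality argument just used for Theorem~\ref{th:th2}. First I would introduce the adjoint three-field Stokes problem: find $[\wt,\pi,\btau]$ with $\wt=\bm 0$ on $\partial\Omega$ such that
\begin{align*}
\nabla\cdot\btau-\nabla\pi &= \frac{\mu}{\ell^2}(\u-\u_h),\\
\nabla\cdot\wt &= 0,\\
\frac{1}{2\mu}\btau+\nabla^S\wt &= \bm 0,
\end{align*}
where $\ell$ is a characteristic length enforcing dimensional consistency. Since the three-field Stokes operator is, up to signs, self-adjoint, this problem has the structure of \Eq{sourcev3}, and elliptic regularity gives $\Vert\wt\Vert_2\lesssim\frac{1}{\ell^2}\Vert\u-\u_h\Vert$, $\Vert\pi\Vert_1\lesssim\frac{\mu}{\ell^2}\Vert\u-\u_h\Vert$ and $\Vert\btau\Vert_1\lesssim\frac{\mu}{\ell^2}\Vert\u-\u_h\Vert$ (the last from $\btau=-2\mu\nabla^S\wt$). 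Testing the three equations with $\u-\u_h$, $p-p_h$ and $\st-\st_h$ and adding yields
\[
\frac{\mu}{\ell^2}\Vert\u-\u_h\Vert^2=B_{\text{II}}([\u-\u_h,p-p_h,\st-\st_h],[\wt,\pi,\btau]).
\]

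Next I would rewrite $B_{\text{II}}$ as $B_{\text{IIS}}$ minus the three stabilization terms of \Eq{BIIS} evaluated on the error triple and the dual solution. The $\alpha_4$ term drops because $\nabla\cdot\wt=0$. For the $\alpha_3$ term I would exploit that $P^\bot=I-P$, so that $\Vert P^\bot(g)\Vert=\Vert g-P(g)\Vert\lesssim h\Vert g\Vert_1$ for smooth $g$; applying this to $\nabla^S\wt$ supplies one power of $h$, while the a priori $H^1$ convergence of Theorem~\ref{th:thc_45} supplies another from $\nabla^S(\u-\u_h)$, giving an $O(h^2)$ bound. The $\alpha_5$ term already carries $h^2$ through $\alpha_{5K}=h_K^2c_5/\mu$; I would close it by a Cauchy--Schwarz split in $\alpha_{5K}$, using the dual momentum equation $\nabla\pi-\nabla\cdot\btau=-\frac{\mu}{\ell^2}(\u-\u_h)$ for one factor and the stabilized-norm control of $P^\bot(\nabla p_h-\nabla\cdot\st_h)$ furnished by the analysis behind Theorem~\ref{th:thc_45} for the other.

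For the surviving $B_{\text{IIS}}([\u-\u_h,p-p_h,\st-\st_h],[\wt,\pi,\btau])$ I would insert the best approximation $[\tilde{\wt}_h,\tilde{\pi}_h,\tilde{\btau}_h]$ of the dual solution and split into a consistency part tested against $[\tilde{\wt}_h,\tilde{\pi}_h,\tilde{\btau}_h]$ and a remainder tested against $[\wt-\tilde{\wt}_h,\pi-\tilde{\pi}_h,\btau-\tilde{\btau}_h]$. Subtracting the discrete equation \Eq{sourcestab3} from the stabilized identity satisfied by the exact solution, the consistency part reduces to the stabilization terms $\alpha_3(P^\bot(\nabla^S\tilde{\wt}_h),P^\bot(\nabla^S\u))$ and $\sum_K\alpha_{5K}(P^\bot(\nabla\tilde{\pi}_h-\nabla\cdot\tilde{\btau}_h),P^\bot(\nabla p-\nabla\cdot\st))_K$, the $\alpha_4$ piece vanishing since $\nabla\cdot\u=0$; these are again $O(h^2)$ by the projection estimate and by $\Vert\nabla p-\nabla\cdot\st\Vert\lesssim\Vert p\Vert_1+\Vert\st\Vert_1$. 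The remainder part is bounded by the continuity of $B_{\text{IIS}}$, combining the a priori error of Theorem~\ref{th:thc_45} for the error triple with the interpolation bound $\Vert[\wt-\tilde{\wt}_h,\pi-\tilde{\pi}_h,\btau-\tilde{\btau}_h]\Vert\lesssim h(\Vert\wt\Vert_2+\Vert\pi\Vert_1+\Vert\btau\Vert_1)$ for the dual. Collecting every contribution bounds $\frac{\mu}{\ell^2}\Vert\u-\u_h\Vert^2$ by $h^2$ times products of $\Vert\u\Vert_2,\Vert\st\Vert_1,\Vert p\Vert_1$ with $\Vert\wt\Vert_2,\Vert\pi\Vert_1,\Vert\btau\Vert_1$; inserting the elliptic-regularity bounds produces the common factor $\frac{1}{\ell^2}\Vert\u-\u_h\Vert$, which cancels one power of the error and leaves exactly the claimed estimate.

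I expect the main obstacle to be the coupled pressure--stress term (the $\alpha_5$ stabilization and its consistency counterpart). Unlike the two-field case, the residual $\nabla p-\nabla\cdot\st$ mixes two fields, so the orthogonal projection must be used in tandem with the full stabilized-norm a priori bound---which is stronger than the energy-norm statement displayed in Theorem~\ref{th:thc_45}---to certify that the discrete factor $P^\bot(\nabla p_h-\nabla\cdot\st_h)$ contributes at order $h$ rather than $O(1)$. Verifying that the extra stress unknown does not spoil the optimal $L^2$ rate is the crux of the argument.
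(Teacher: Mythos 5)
Your proposal is correct and is essentially the argument behind the theorem: the paper itself does not prove this statement (it is quoted from \cite{codina13}), but the proof there, like the paper's own proof of the two-field analogue (Theorem~\ref{th:th2}), is precisely the duality argument you describe --- the same adjoint problem, the same rewriting of $B_{\text{II}}$ as $B_{\text{IIS}}$ minus the stabilization terms, and the same consistency/best-approximation splitting with identical order bookkeeping. Your closing worry about the coupled $\alpha_5$ term is unnecessary: since $\alpha_{5K}\sim h_K^2/\mu$ already supplies the full factor $h^2$, order-one bounds on both projection factors (obtained from $H^1$-stability of the interpolant and an inverse estimate combined with the $L^2$ bounds of Theorem~\ref{th:thc_45}) close the estimate, without invoking any stabilized-norm control stronger than what is displayed in that theorem.
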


\section{Numerical analysis of the eigenvalue problems}
\label{sec:evpanalysis}

In this section, we aim to apply the convergence analysis to the two-field and three-field Stokes eigenproblems. Mainly, we account for the theory developed in \cite{boffi2010}, which has its roots in the abstract spectral approximation theory of Babu\v ska-Osborn. The convergence results, and the error estimates for the displacement in suitable norms obtained in the previous section, are considered as the constitutional steps to accomplish our tasks. We will report the sufficient and necessary conditions for the convergence of eigenvalues and eigenfunctions to the continuous problems, and the approximation rates for each case. 

\subsection{The two-field eigenproblem}
\label{subsec:stabconv2e}

The object of this subsection is to provide, for the two-field case, the necessary and sufficient conditions for
proving that the eigenvalues and eigenfunctions of \Eq{fevp2w_hs} converge to those of \Eq{fevp2} with no spurious solutions, and to find an estimate for the order of convergence. As already discussed, the convergence results stated in Section \ref{subsubsec:stabconv2} will be used following the spectral approximation theory with an analogous notation to that of \cite{boffi2010}. However, before proceeding, we want to emphasize that the Galerkin formulation of the two-field eigenproblem \Eq{fevp2} can be set into the framework of a standard mixed eigenvalue problem of the first type according to the classification in \cite{boffi2010} and \cite{boffi1997} as follows: find a nontrivial $\u \in {\cal V}$ and $\lambda \in \mathbb{R}$ such that for some  $p \in {\cal Q}$  
\begin{align}
\label{eq:fevp2boffi}
\begin{cases}
a_{\text{I}}(\u,\vt) + b_{\text{I}}(\vt,p) =\lambda (\u,\vt) \quad \forall  \vt \in  {\cal V}, \\ 
b_{\text{I}}(\u,q) = 0 \quad \forall  q \in {\cal Q},
\end{cases}
\end{align} 
where the  bilinear forms introduced are given by
\begin{align*}
 &a_{\text{I}}(\u,\vt)=\mu  (\nabla \u, \nabla \vt), \\
 &b_{\text{I}}(\vt,q) =(q, \nabla \cdot \vt ). 
\end{align*}
If  ${\cal V}_h \subset {\cal V}$ and  ${\cal Q}_h \subset {\cal Q}$ are the finite element spaces to approximate the solution, the Galerkin finite element approximation can be written as: find a nontrivial $\u_h \in {\cal V}_h$ and $\lambda \in \mathbb{R}$ such that for some  $p_h \in {\cal Q}_h$ there holds
\begin{align}
\label{eq:fevp2boffi_h}
\begin{cases}
a_{\text{I}}(\u_h,\vt_h) + b_{\text{I}}(\vt_h,p_h) =\lambda (\u_h,\vt_h) \quad \forall  \vt_h \in  {\cal V}_h, \\ 
b_{\text{I}}(\u_h,q_h) = 0 \quad \forall  q_h \in {\cal Q}_h.
\end{cases}
\end{align}
The convergence of the eigensolutions to \Eq{fevp2boffi_h} towards those of \Eq{fevp2boffi} is analyzed in~\cite{boffi2010}.  

Let us come back to our main task of analyzing the two-field eigenvalue problem. The existence and uniqueness of the solutions to \Eq{sourcev2}  and \Eq{sourcestab2} allows us to define the operators  $T, T_h:{\cal X} \rightarrow {\cal X}$  such that for any $\f \in {\cal X}$,  $T \f = \u$ and $T_h \f = \u_h$  are the displacement components of the solutions to  \Eq{sourcev2} and \Eq{sourcestab2}, respectively, where ${\cal X}$ can be either $(H^1_0(\Omega))^d$ or $(L^2(\Omega))^d$. 
Now, by means of Theorem~\ref{th:thc_2fs}, we can state the convergence of the discrete operator $T_h$ to the continuous operator $T$, that is to say,
\begin{align}
\label{eq:normT}
\Vert T-T_h \Vert_{{\cal L}({\cal X})}     \rightarrow 0 \quad \hbox{as }~ h \rightarrow 0,
\end{align} 
which is equivalent to convergence of the eigenvalues and eigenfunctions according to the theory given in \cite{boffi2010} and \cite{boffi2000a}. In \Eq{normT}, ${{\cal L}({\cal X})}$ denotes the space of endomorphisms in $\cal X$ and $\Vert \cdot \Vert_{{\cal L}({\cal X})}$ its natural norm.



We next present in the following theorem the error estimates for eigenvalues of the approximate problem:

\begin{theorem}
\label{th:conv2_ev}
Assume that the continuous problem satisfies the  regularity condition
\begin{align}
\label{eq:assumpt_2genk}
 \sqrt{\mu} \Vert \u \Vert_{k''+1} + \frac{1}{\sqrt{\mu}}\Vert p  \Vert_{k''} \lesssim \frac{1}{\sqrt{\mu}} h^{k''}   \Vert \f \Vert_{k''-1},
\end{align}
for $k''>0$. Then the following optimal double order of convergence holds 
\begin{align}
\left| \lambda-\lambda_h \right| \lesssim  \frac{\mu}{\ell^2}\left(\frac{h}{\ell}\right)^{2k''},
\end{align}
where $\ell$ is, as before, a characteristic length scale of the problem. 
\end{theorem}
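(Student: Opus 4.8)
The plan is to place the discrete eigenproblem \Eq{fevp2w_hs} inside the abstract spectral framework of \cite{Babuskaosborn1991,boffi2010} through the solution operators $T,T_h:{\cal X}\to{\cal X}$ introduced above, exploiting the self-adjointness that the orthogonal-subscale construction preserves. The first point I would stress is that, precisely because the projection is the \emph{orthogonal} one, the stabilized form $B_{\text{IS}}$ of \Eq{BIS} is symmetric and the discrete problem is a genuine \emph{linear} symmetric eigenproblem; consequently $T_h$ is self-adjoint with respect to the $L^2$ inner product (and compact, as is $T$). Hence $\mu=1/\lambda$ and $\mu_h=1/\lambda_h$ are the eigenvalues of $T$ and $T_h$, both real and positive, and by \Eq{normT} no spurious modes arise. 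For self-adjoint $T$ the Babu\v ska--Osborn eigenvalue estimate reduces, for a normalized eigenfunction $\u$ spanning the (say simple) eigenspace $E$, to
\begin{align}
|\lambda-\lambda_h| \lesssim |\langle (T-T_h)\u,\u\rangle| + \|(T-T_h)|_E\|_{{\cal L}(L^2)}^2, \nonumber
\end{align}
the second term being a remainder; the analysis therefore hinges on the scalar $\langle (T-T_h)\u,\u\rangle$.

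Next I would reduce this scalar to an energy-type error of the \emph{source} problem with datum $\f=\u$. Writing $[\wt,\pi]=T\u$ and $[\wt_h,\pi_h]=T_h\u$ for the continuous and discrete source solutions (so that $\wt=\tfrac{1}{\lambda}\u$, since $\u$ is an eigenfunction), symmetry of $B_{\text{I}}$ gives $\langle (T-T_h)\u,\u\rangle=(\u,\wt-\wt_h)=B_{\text{I}}([\wt,\pi],[\wt-\wt_h,\pi-\pi_h])$. The stabilized Galerkin orthogonality states that $B_{\text{IS}}([\wt-\wt_h,\pi-\pi_h],[\vt_h,q_h])$ equals the stabilization term evaluated at the continuous solution (the consistency error), which, because $P^\bot$ annihilates finite element functions, involves only $P^\bot(\nabla\pi)$ and $P^\bot(\nabla\cdot\wt)$. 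Splitting $\langle(T-T_h)\u,\u\rangle$ into $B_{\text{IS}}([\wt-\wt_h,\pi-\pi_h],[\wt-\wt_h,\pi-\pi_h])$ plus these consistency contributions, the first piece is the mesh-dependent energy norm of the source error \emph{squared}, hence of order $\varepsilon_{\text{I}}(h)^2$ by Theorem~\ref{th:thc_2fs}, while the consistency pieces are controlled at the same order using the parameter scalings $\alpha_{1K}\sim h_K^2/\mu$ and $\alpha_2\sim\mu$ together with the $H^1$-stability of the projection.

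Finally I would insert the source-problem rates. Theorem~\ref{th:thc_2fs} gives the energy bound $\sqrt{\mu}\|\wt-\wt_h\|_1+\tfrac{1}{\sqrt{\mu}}\|\pi-\pi_h\|\lesssim\varepsilon_{\text{I}}(h)$, and the regularity hypothesis \Eq{assumpt_2genk} converts the norms of the source solution (with $\f=\lambda\u$) into powers of $h$, turning the energy-error-squared into a quantity of order $\tfrac{\mu}{\ell^2}(h/\ell)^{2k''}$ once the dimensional factors in $\mu$ and $\ell$ are carried through. The remainder $\|(T-T_h)|_E\|_{{\cal L}(L^2)}^2$ is bounded via the $L^2$ estimate of Theorem~\ref{th:th2}, giving order $h^4$, which is of equal or higher order and does not spoil the bound. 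Collecting these contributions yields the claimed $|\lambda-\lambda_h|\lesssim \tfrac{\mu}{\ell^2}(h/\ell)^{2k''}$.

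\textbf{Main obstacle.} The delicate point is the treatment of the stabilization consistency terms in the second step: since $B_{\text{I}}$-orthogonality does \emph{not} hold (only $B_{\text{IS}}$-orthogonality does), one must show that replacing Galerkin consistency by stabilized consistency introduces only $O\!\big((h/\ell)^{2k''}\big)$ errors. This is exactly where the orthogonal choice $\tilde{P}=P^\bot$ is essential---it removes the finite element components, keeps the discrete problem linear and self-adjoint (so that the double order is available at all), and renders the residual terms amenable to the parameter-weighted bounds---and where the bookkeeping in $\mu$, $\ell$ and $h$ must be tracked carefully to land exactly on the stated rate.
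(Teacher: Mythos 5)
Your proposal follows the same skeleton as the paper's proof: both place the discrete problem in the Babu\v ska--Osborn framework through the solution operators $T$, $T_h$ of the source problems and feed in the estimates of Theorem~\ref{th:thc_2fs}. The difference is one of economy. The paper's entire proof is the chain of estimates $\sqrt{\mu}\,\Vert T\f-T_h\f\Vert_1=\sqrt{\mu}\,\Vert\u-\u_h\Vert_1\lesssim\varepsilon_{\text{I}}(h)\lesssim h^{k''}\tfrac{1}{\sqrt{\mu}}\Vert\f\Vert_{k''-1}\lesssim\tfrac{\ell^2}{\sqrt{\mu}}(h/\ell)^{k''}\Vert\f\Vert_1$, i.e.\ first-order operator-norm convergence in ${\cal X}=(H^1_0(\Omega))^d$ obtained from Theorem~\ref{th:thc_2fs}, the regularity assumption and a norm embedding, after which the doubling of the order is obtained by citing Corollary~9.8 of \cite{boffi2010}. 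Everything you reconstruct by hand --- the scalar term $\langle(T-T_h)\u,\u\rangle$, the remainder $\Vert(T-T_h)|_E\Vert^2$, the orthogonality-plus-consistency manipulation --- is precisely what that citation subsumes; in particular the paper never uses Theorem~\ref{th:th2} in this proof (it enters only through Remark~\ref{rem2}, for the alternative choice ${\cal X}=(L^2(\Omega))^d$). Your route has the merit of exposing where the weak consistency of the stabilization enters, which the paper leaves implicit in the reference.

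Two concrete points in your execution, however, do not hold up. First, you bound the remainder by $\Vert(T-T_h)|_E\Vert^2_{{\cal L}(L^2)}\lesssim h^4$ via Theorem~\ref{th:th2}; since the theorem claims the rate $h^{2k''}$ for every $k''>0$, this caps your final estimate at order $h^4$ and the claimed rate is lost whenever $k''>2$. The remedy is the paper's choice of space: with ${\cal X}=(H^1_0(\Omega))^d$, Theorem~\ref{th:thc_2fs} plus the regularity hypothesis gives $\Vert(T-T_h)|_E\Vert_{{\cal L}({\cal X})}\lesssim h^{k''}$, so the squared remainder has exactly the right order for all $k''$. Second, the heart of your doubling argument is asserted rather than proved: the claim that ``the consistency pieces are controlled at the same order using the parameter scalings'' is nontrivial, since bounding $\sum_K\alpha_{1K}(P^\bot(\nabla\pi),P^\bot(\nabla q_h))_K$ at order $h^{2k''}$ requires control of the stabilization terms in the mesh-dependent energy norm of the source error, which is available in \cite{codina10,codinab1} but not in the abbreviated statement of Theorem~\ref{th:thc_2fs} you invoke. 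Moreover, your appeal to ``symmetry of $B_{\text{I}}$'' is incorrect as stated: $B_{\text{I}}$ is not symmetric (the pressure--divergence terms change sign under exchange of arguments); the problem is symmetric only under a simultaneous sign flip of both pressure slots, and the self-adjointness of $T$, $T_h$ and the Galerkin-orthogonality manipulations must be routed through this sign-flip structure, which changes the bookkeeping in your second step. As written, your argument establishes the theorem only for $k''\le 2$ and modulo an unproven consistency estimate.
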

 
\begin{proof} For ${\cal X}=(H^1_0(\Omega))^d$, from Theorem \ref{th:thc_2fs} it follows that
\begin{align*}
\sqrt{\mu} \Vert T \f-T_h \f \Vert_{1}   
&=\sqrt{\mu} \Vert \u - \u_h \Vert_{1} \\
&\lesssim \varepsilon_{\text{I}}(h) \\
&\lesssim \sqrt{\mu} \, h^{k''}  \Vert \u\Vert_{k''+1}  +\frac{1}{\sqrt{\mu}}h^{k''} \Vert p\Vert_{k''} \\
&\lesssim h^{k''} \frac{1}{\sqrt{\mu}} \Vert \f\Vert_{k''-1} \\
&\lesssim \frac{\ell^{2}}{\sqrt{\mu}}   \left(\frac{h}{\ell}\right)^{k''}\Vert \f\Vert_{1} 
\end{align*} 
where a norm embedding has been used in the last step. The proof follows from Corollary 9.8 of \cite{boffi2010}, using the definitions of $\varepsilon_{\text{I}}(h)$ and $k''$. 
\end{proof}

\begin{remark}
\label{rem2}
The assumption \Eq{assumpt_2genk} for $0<k''<1$ suffices to prove the convergence \Eq{normT} in the proof of Theorem \ref{th:conv2_ev}. On the other hand, the stronger assumption $k''=1$ is needed to obtain the $L^2$-error estimate given by Theorem \ref{th:th2}, which is essential to obtain the convergence result  \Eq{normT} if  ${\cal X}=(L^2(\Omega))^d$ is chosen.
\end{remark}

Next, we make use of Corollary 9.4 of \cite{boffi2010} to conclude that if $\lambda$ is an eigenvalue of \Eq{fevp2w_h} with algebraic multiplicity $m$, and $E=E(\lambda^{-1}){\cal X}$ is its generalized eigenspace, where $E(\lambda)$ is the Riesz spectral projection associated with $\lambda$, and if $E_h=E_h(\lambda^{-1}){\cal X}$. Then 
\begin{align*}
\hat{\delta}(E,E_h) \lesssim  \sup_{\substack {{\scriptsize\u}\in E \\ \Vert {\scriptsize\u}  \Vert_{{\cal X}}=1 }}\inf_{{\scriptsize\u}_h\in E_h} \Vert \u -\u_h  \Vert_{{\cal X}}.
\end{align*}
Having arrived at these results, one can now prove the following:

\begin{theorem}
\label{th:conv2_ef} 
Let $\u$ be a unit eigenfunction solution of \Eq{fevp2w} associated to the eigenvalue $\lambda$ of multiplicity $m$, and let ${\bm\phi}^1_h, \ldots ,{\bm \phi}^m_h$ be the eigenfunctions associated with the $m$ discrete eigenvalues solution of \Eq{fevp2w_hs} converging to $\lambda$. Then there exists a discrete eigenfunction $\u_h \in \hbox{\rm span}\{{\bm\phi}^1_h, \ldots ,{\bm\phi}^m_h\}$ such that 
\begin{align}
\Vert \u - \u_h \Vert_{1} \lesssim  h^{k''}  \Vert \u  \Vert_{k''+1}. 
\end{align}
\end{theorem}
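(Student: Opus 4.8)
The plan is to obtain the claim as a short consequence of the abstract directed-gap estimate stated just above (Corollary 9.4 of \cite{boffi2010}) combined with the source-problem convergence of Theorem \ref{th:thc_2fs}, working throughout with ${\cal X}=(H^1_0(\Omega))^d$ so that the gap is measured in the $H^1$ norm. The only genuine work is to bound $\hat\delta(E,E_h)$ by $h^{k''}\Vert\u\Vert_{k''+1}$; once that bound is in hand, the existence of the asserted $\u_h$ is immediate from the definition of the directed gap.

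First I would recall that the given eigenfunction $\u$ is an eigenfunction of the solution operator $T$ with eigenvalue $\lambda^{-1}$, i.e. $T\u=\lambda^{-1}\u$. Consequently the source problem \Eq{sourcev2} with data $\f=\u$ has continuous solution $\lambda^{-1}\u$ together with the associated eigen-pressure, whose regularity is precisely that of the eigenpair, so the regularity hypothesis \Eq{assumpt_2genk} is available. Applying Theorem \ref{th:thc_2fs} to this data and rerunning the chain of estimates already carried out in the proof of Theorem \ref{th:conv2_ev}, I would get
\begin{align*}
\Vert (T-T_h)\u\Vert_{1}=\Vert \lambda^{-1}\u-T_h\u\Vert_{1}\lesssim \frac{1}{\sqrt\mu}\varepsilon_{\text{I}}(h)\lesssim h^{k''}\Vert\u\Vert_{k''+1},
\end{align*}
where the pressure contribution hidden in $\varepsilon_{\text{I}}(h)$ is absorbed into $\Vert\u\Vert_{k''+1}$ through \Eq{assumpt_2genk}. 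Since $E$ is the finite-dimensional generalized eigenspace spanned by such eigenfunctions, taking the supremum over its unit elements gives $\Vert (T-T_h)|_E\Vert_{{\cal L}({\cal X})}\lesssim h^{k''}\Vert\u\Vert_{k''+1}$ in the $H^1$ norm.

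I would then feed this into the directed-gap estimate to conclude $\hat\delta(E,E_h)\lesssim h^{k''}\Vert\u\Vert_{k''+1}$. By the definition of $\hat\delta(E,E_h)$ as $\sup_{\u\in E,\,\Vert\u\Vert_{\cal X}=1}\inf_{\u_h\in E_h}\Vert\u-\u_h\Vert_{\cal X}$, the given unit eigenfunction $\u\in E$ admits an element $\u_h$ of the discrete eigenspace $E_h=\hbox{span}\{\bm\phi^1_h,\ldots,\bm\phi^m_h\}$ realizing, up to a constant, the infimum, whence $\Vert\u-\u_h\Vert_{1}\lesssim\hat\delta(E,E_h)\lesssim h^{k''}\Vert\u\Vert_{k''+1}$, which is exactly the asserted estimate.

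The main obstacle is conceptual rather than computational: one must transfer the \emph{source}-problem bound of Theorem \ref{th:thc_2fs} to \emph{eigenfunction} data through the identity $T\u=\lambda^{-1}\u$, and one must be sure that $E_h$ really has dimension $m$ with no spurious modes, so that the directed gap is meaningful and $\u_h$ can be drawn from $\hbox{span}\{\bm\phi^1_h,\ldots,\bm\phi^m_h\}$. This last point rests on the norm convergence $\Vert T-T_h\Vert_{{\cal L}({\cal X})}\to 0$ recorded in \Eq{normT}. The mismatch between the $L^2$-normalization $(\u_i,\u_j)=\delta_{ij}$ and the $H^1$-unit normalization used here only introduces $\lambda$-dependent constants, which are harmless under $\lesssim$.
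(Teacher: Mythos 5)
Your proposal is correct and takes essentially the same route as the paper: the paper states Theorem \ref{th:conv2_ef} as a direct consequence of Corollary 9.4 of \cite{boffi2010} combined with the operator bound $\Vert (T-T_h)\f\Vert_1\lesssim h^{k''}\Vert\f\Vert_{k''-1}$ already derived in the proof of Theorem \ref{th:conv2_ev}, which is precisely the argument you spell out (using $T\u=\lambda^{-1}\u$ and Theorem \ref{th:thc_2fs} to bound $\Vert (T-T_h)|_E\Vert_{{\cal L}({\cal X})}$, then extracting $\u_h$ from the gap estimate). You merely make explicit the details the paper leaves implicit, namely that $\dim E_h=m$ with no spurious modes by \Eq{normT}, and that the $L^2$ versus $H^1$ normalization only affects $\lambda$-dependent constants.
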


\subsection{The three-field eigenproblem}
\label{subsec:stabconv3e}

We first remark that it is possible to obtain a standard mixed formulation for the  three-field eigenproblem \Eq{fevp3} as follows (see also  \cite{fortin-pierre-1989-1}):  find  $\u \in {\cal V}$ and $\lambda \in \mathbb{R}$ such that for some  $[\st,p] \in {\cal T} \times {\cal Q}$ there holds
\begin{align}
\label{eq:fevp3boffi}
\begin{cases}
a_{\text{II}}([\st,p],[\tt,q]) + b_{\text{II}}([\tt,q],\u) =0  \quad \forall  [\tt,q] \in  {\cal T} \times {\cal Q},\\
b_{\text{II}}([\st,p],\vt) =-\lambda (\u,\vt)   \quad \forall  \vt \in  {\cal V} ,
\end{cases}
\end{align} where we have introduced the following bilinear forms:
\begin{align*}
 &a_{\text{II}}([\st,p],[\tt,q]) =\frac{1}{2\mu} (\st,\tt),   \\
 &b_{\text{II}}([\st,p],\vt) =- (\tt-q{\bm I}, \nabla^S \u). 
\end{align*}

As before, once the finite element spaces ${\cal V}_h \subset {\cal V}$,  ${\cal Q}_h \subset {\cal Q}$ and ${\cal T}_h \subset {\cal T}$ have been constructed, the discrete eigenvalue problem can be written as follows: find  $\u_h \in {\cal V}_h$ and $\lambda \in \mathbb{R}$ such that for some  $[\st_h,p_h] \in {\cal T}_h \times {\cal Q}_h$ there holds
\begin{align}
\label{eq:fevp3boffi_h}
\begin{cases}
   a_{\text{II}}([\st_h,p_h],[\tt_h,q_h]) + b_{\text{II}}([\tt_h,q_h],\u_h) =0  \quad \forall  [\tt_h,q_h] \in  {\cal T}_h \times {\cal Q}_h,\\
   b_{\text{II}}([\st_h,p_h],\vt_h) =-\lambda_h (\u_h,\vt_h) \quad \forall  \vt_h \in  {\cal V}_h .
\end{cases}
\end{align}
This is a standard mixed eigenvalue problem of the second type according to the classification in \cite{boffi2010}, and can be analyzed by using the abstract theory given there. 

Our ultimate purpose in this section is to provide the necessary and sufficient conditions for
proving that the eigenvalues and eigenfunctions of \Eq{fevp3w_hs} converge to those of \Eq{fevp3w_v} with no spurious solutions, and to estimate the order of convergence. Thus, we now proceed to establish the convergence results based on Section \ref{subsubsec:stabconv3}, as before following the notation and the ingredients of \cite{boffi2010}.
From the well posedness of problems \Eq{sourcev3}  and \Eq{sourcestab3}, for any $\f \in {\cal X}$,
we can define the operators $Z, Z_{h}: {\cal X} \rightarrow {\cal X}$ such that
 $Z \f = \u$ and $Z_h \f = \u_h$  are the displacement components of the solutions to  \Eq{sourcev3} and \Eq{sourcestab3}, respectively. 
In this way, Theorem \ref{th:thc_45} allows us to state the convergence 
\begin{align}
\Vert Z-Z_h \Vert_{{\cal L}({\cal X})} \rightarrow 0 \quad \hbox{as }~ h \rightarrow 0,\label{eq:nomT3}
\end{align} 
which is equivalent to the convergence of eigenvalues and eigenfunctions we are seeking. 


The following theorem provides the rate of convergence of the eigenvalues:

\begin{theorem}
\label{th:conv3_ev}
Assume that the continuous problem satisfies the  regularity condition
\begin{align}
\label{eq:assumpt_3genk}
 \sqrt{\mu} \Vert \u \Vert_{k''+1} + \frac{1}{\sqrt{\mu}}\Vert p  \Vert_{k''} 
 + \frac{1}{\sqrt{\mu}}\Vert \st  \Vert_{k''} \lesssim \frac{1}{\sqrt{\mu}} h^{k''}   \Vert \f \Vert_{k''-1},
\end{align}
for $k''>0$. Then have the following optimal double order of convergence  
\begin{align}
\left| \lambda-\lambda_h \right| \lesssim  \frac{\mu}{\ell^2}\left(\frac{h}{\ell}\right)^{2k''}.
\end{align}
\end{theorem}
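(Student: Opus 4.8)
The plan is to reproduce, for the three-field problem, the argument used for the two-field eigenvalue rate in Theorem \ref{th:conv2_ev}, replacing the pair $T,T_h$ by the operators $Z,Z_h$ and the source estimate of Theorem \ref{th:thc_2fs} by that of Theorem \ref{th:thc_45}. The whole point is to reduce the eigenvalue error to the \emph{square} of a single-order operator-norm bound on $Z-Z_h$; this squaring is what produces the double order $2k''$.

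First I would fix ${\cal X}=(H^1_0(\Omega))^d$ and, for $\f\in{\cal X}$, set $Z\f=\u$, $Z_h\f=\u_h$. The convergence part of Theorem \ref{th:thc_45}, together with the definition \Eq{Eh} of $\varepsilon_{\text{II}}(h)$, gives
\begin{align*}
\sqrt{\mu}\,\Vert Z\f-Z_h\f\Vert_1 = \sqrt{\mu}\,\Vert\u-\u_h\Vert_1 \lesssim \varepsilon_{\text{II}}(h).
\end{align*}
Inserting the interpolation-error expressions and then the regularity assumption \Eq{assumpt_3genk}, I would bound
\begin{align*}
\varepsilon_{\text{II}}(h) \lesssim \sqrt{\mu}\,h^{k''}\Vert\u\Vert_{k''+1}+\frac{1}{\sqrt{\mu}}h^{k''}\Vert\st\Vert_{k''}+\frac{1}{\sqrt{\mu}}h^{k''}\Vert p\Vert_{k''} \lesssim \frac{1}{\sqrt{\mu}}h^{k''}\Vert\f\Vert_{k''-1},
\end{align*}
and, by the same Sobolev embedding used in the two-field case, rewrite the right-hand side as $\lesssim (\ell^{2}/\sqrt{\mu})(h/\ell)^{k''}\Vert\f\Vert_1$. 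This is the single-order operator-norm estimate $\Vert Z-Z_h\Vert_{{\cal L}({\cal X})}\lesssim (h/\ell)^{k''}$ (up to the dimensional factor).

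With this bound in hand I would invoke the spectral approximation theory of \cite{boffi2010}. Problem \Eq{fevp3boffi} is a mixed eigenvalue problem of the second type, but its velocity solution operator $Z$ is self-adjoint and compact, since the three-field system is merely a reformulation of the self-adjoint Stokes eigenproblem. We are therefore in the symmetric branch of the theory, where the eigenvalue error is controlled by the square of the eigenfunction approximation error; applying the second-type analogue of Corollary 9.8 of \cite{boffi2010} and tracking the prefactor $\mu/\ell^2$ yields $|\lambda-\lambda_h|\lesssim (\mu/\ell^2)(h/\ell)^{2k''}$. The qualitative convergence \Eq{nomT3} underlying this step, in the case ${\cal X}=(L^2(\Omega))^d$, rests in turn on the $L^2$-estimate of Theorem \ref{th:th3fL2}, exactly as Remark \ref{rem2} explains for the two-field problem.

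I expect the main obstacle to be the passage from the single-order operator bound to the double-order eigenvalue estimate. Although $B_{\text{II}}$ is not itself symmetric, one must argue that $Z$ is genuinely self-adjoint so that the error may legitimately be squared, verify that the stabilized operator $Z_h$ fits the hypotheses of the second-type framework of \cite{boffi2010} without the orthogonal-subscale terms of $B_{\text{IIS}}$ destroying consistency, and then track the several factors of $\mu$ and $\ell$ so that the prefactor $\mu/\ell^{2}$ emerges correctly. The remaining interpolation and regularity manipulations are routine and mirror the two-field proof.
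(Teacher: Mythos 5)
Your proposal follows essentially the same route as the paper's proof: fix ${\cal X}=(H^1_0(\Omega))^d$, use Theorem \ref{th:thc_45} and the definition of $\varepsilon_{\text{II}}(h)$ together with the regularity assumption \Eq{assumpt_3genk} and a norm embedding to obtain the single-order operator bound $\sqrt{\mu}\Vert Z\f - Z_h\f\Vert_1 \lesssim (\ell^2/\sqrt{\mu})(h/\ell)^{k''}\Vert\f\Vert_1$, and then invoke Corollary 9.8 of \cite{boffi2010} to convert this into the double-order eigenvalue estimate. Your additional remarks on the self-adjointness of $Z$ and the second-type mixed structure are a more explicit account of why the squaring in that corollary applies, but they do not change the argument, which matches the paper's.
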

 \begin{proof} Let ${\cal X}=(H^1_0(\Omega))^d$. From  Theorem \ref{th:thc_45} we have
\begin{align*}
\sqrt{\mu} \Vert Z \f-Z_h \f \Vert_{1}   
&=\sqrt{\mu} \Vert \u - \u_h \Vert_{1} \\
&\lesssim \varepsilon_{\text{II}}(h) \\
&\lesssim \sqrt{\mu} h^{k''}  \Vert \u\Vert_{k''+1} +\frac{1}{\sqrt{\mu}}h^{k''} \Vert \st\Vert_{k''} +\frac{1}{\sqrt{\mu}}h^{k''} \Vert p\Vert_{k''} \\
&\lesssim h^{k''}   \frac{1}{\sqrt{\mu}}\Vert \f\Vert_{k''-1} \\
&\lesssim  \frac{\ell^{2}}{\sqrt{\mu}}   \left(\frac{h}{\ell}\right)^{k''}\Vert \f\Vert_{1}.
\end{align*} 
The proof is completed by following Corollary 9.8 of \cite{boffi2010}, and observing the definitions of $\varepsilon_{\text{II}}(h)$ and $k''$. 
\end{proof}

\begin{remark}
The convergence for the choice of ${\cal X}=(L^2(\Omega))^d$  can similarly be obtained as a result of  the $L^2$-error estimate of the displacement given in Theorem~\ref{th:th3fL2} by assuming that the elliptic regularity condition holds with $k'' = 1$.
\end{remark}

Next, we make use of Corollary 9.4 of \cite{boffi2010} to conclude that if $\lambda$ is an eigenvalue of \Eq{fevp3w_v} with algebraic multiplicity $m$, $E=E(\lambda^{-1}){\cal X}$ is its generalized eigenspace, where $E(\lambda)$ is the Riesz spectral projection associated with $\lambda$, and if  $E_h=E_h(\lambda^{-1}){\cal X}$, then 
\begin{align*}
\hat{\delta}(E,E_h) \lesssim  \sup_{\substack {{\scriptsize\u}\in E \\ \Vert {\scriptsize\u}  \Vert_{{\cal X}}=1 }}\inf_{{\scriptsize\u}_h\in E_h} \Vert \u -\u_h  \Vert_{{\cal X}}.
\end{align*}
From these results we can conclude exactly the same as in Theorem~\ref{th:conv2_ef}:
\begin{theorem}
\label{th:conv3_ef}
Let $\u$ be a unit eigenfunction solution of \Eq{fevp3w_v} associated to the eigenvalue $\lambda$ of multiplicity $m$, and let ${\bm\phi}^1_h, \ldots ,{\bm \phi}^m_h$ be the eigenfunctions associated with the $m$ discrete eigenvalues solution of \Eq{fevp3w_hs} converging to $\lambda$. Then there exists a discrete eigenfunction $\u_h \in \hbox{\rm span}\{{\bm\phi}^1_h, \ldots ,{\bm\phi}^m_h\}$ such that 
\begin{align}
\Vert \u - \u_h \Vert_{1} \lesssim  h^{k''}  \Vert \u  \Vert_{k''+1}. 
\end{align}
\end{theorem}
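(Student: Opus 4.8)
The plan is to mirror the structure of Theorem~\ref{th:conv2_ef} exactly, since the statement is identical in form and the authors explicitly say ``we can conclude exactly the same.'' The starting point is the spectral gap estimate quoted just before the theorem from Corollary 9.4 of \cite{boffi2010},
\begin{align*}
\hat{\delta}(E,E_h) \lesssim  \sup_{\substack {{\scriptsize\u}\in E \\ \Vert {\scriptsize\u}  \Vert_{{\cal X}}=1 }}\inf_{{\scriptsize\u}_h\in E_h} \Vert \u -\u_h  \Vert_{{\cal X}},
\end{align*}
which bounds the gap between the continuous generalized eigenspace $E$ and the discrete one $E_h$ by the best-approximation error of eigenfunctions in $E$. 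The first step is to choose ${\cal X}=(H^1_0(\Omega))^d$ so that the norm $\Vert\cdot\Vert_{\cal X}$ becomes $\Vert\cdot\Vert_1$, matching the norm in the conclusion.

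Next I would estimate the right-hand side. Because $\u$ lies in the finite-dimensional generalized eigenspace $E$ associated with $\lambda$ of multiplicity $m$, and $E_h=\hbox{\rm span}\{{\bm\phi}^1_h,\ldots,{\bm\phi}^m_h\}$ is spanned by the discrete eigenfunctions converging to $\lambda$, the infimum over $\u_h\in E_h$ can be realized by a suitable $\u_h$ in that span. The best-approximation error is then controlled by the interpolation/approximation estimate already available: an eigenfunction is itself the solution of the source problem \Eq{sourcev3} with datum $\f=\lambda\u$, so the convergence rate of Theorem~\ref{th:thc_45} (equivalently the chain of bounds displayed in the proof of Theorem~\ref{th:conv3_ev}) applies and gives $\Vert\u-\u_h\Vert_1 \lesssim h^{k''}\Vert\u\Vert_{k''+1}$. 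Substituting into the gap estimate yields the desired bound, since $\hat{\delta}(E,E_h)$ dominates the distance from the unit eigenfunction $\u$ to $\hbox{\rm span}\{{\bm\phi}^1_h,\ldots,{\bm\phi}^m_h\}$.

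The main obstacle, such as it is, is not analytical but bookkeeping: one must verify that the abstract hypotheses of Corollary 9.4 of \cite{boffi2010} genuinely hold in the present orthogonal-subscale stabilized setting, namely the uniform convergence \Eq{nomT3} of $Z_h$ to $Z$ together with the compactness of $Z$ on ${\cal X}=(H^1_0(\Omega))^d$. These are already secured by Theorem~\ref{th:thc_45} and the discussion surrounding \Eq{nomT3}, so the step is justified; the only care needed is to confirm that the regularity of the eigenfunction $\u\in E$ is sufficient (i.e.\ $\u\in (H^{k''+1}(\Omega))^d$) for the approximation estimate to apply, which follows from the elliptic regularity of the Stokes operator restricted to the eigenspace. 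With these ingredients in place, the proof is essentially a one-line invocation of the gap inequality followed by the approximation bound, identical to the two-field argument.
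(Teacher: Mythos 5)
Your proposal is correct and takes essentially the same route as the paper: the paper offers no separate argument for this theorem, presenting it (as in the two-field case) as an immediate consequence of the eigenspace gap estimate of Corollary 9.4 of \cite{boffi2010} combined with the source-problem convergence of Theorem~\ref{th:thc_45} applied to eigenfunctions, whose regularity supplies the rate --- exactly the chain you assemble, with the hypotheses (norm convergence \Eq{nomT3}, compactness, eigenfunction regularity) correctly identified. One clarification worth keeping in mind: the source-problem estimate with datum $\lambda\u$ bounds the distance from $\u$ to the full discrete velocity space (the discrete source solution need not lie in $\hbox{\rm span}\{{\bm\phi}^1_h,\ldots,{\bm\phi}^m_h\}$), and it is precisely the gap inequality that converts this bound into a distance to the discrete eigenspace, which is the role your final step assigns to it.
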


\section{Numerical results}
\label{sec:numres}

In this section we present some numerical tests to illustrate the theoretical convergence results obtained for the two-field and three-field Stokes problems in two dimensions. Three different problem domains, namely, a square domain, an L-shaped domain, and a square with a crack, are considered in Sections \ref{subsec:square}, \ref{subsec:lshape}, and \ref{subsec:cracked}, respectively. The orthogonal subscale stabilization method is applied with equal order of $P_1$ (linear) and $P_2$ (quadratic) interpolations for all the unknowns on triangular elements.

It is important to note that all the theory about stabilized finite element methods applies for some fixed values of the constants defined in these parameters. The accuracy of the approximation for a fixed mesh size depends on the discretization type of the region as well as on the choice of the algebraic constants in the stabilization parameters. 

In the present study, the  method given in \Eq{fevp2w_hs} is applied using fixed values of the constants, that we have chosen as $c_1=1/4$ and $c_2=1/10$ for both $P_1$ and $P_2$ elements to solve the two-field eigenproblem \Eq{fevp2}. For the three-field Stokes eigenvalue problem \Eq{fevp3}, we employ the method given in  \Eq{fevp3w_hs}, where the constants of the stabilization parameters are now taken as $c_3=1$, $c_4=1/10$ and $c_5=1/4$. 

The case $\mu=1$ is considered for all the tests we examine, and as the exact solutions to the considered eigenproblems are unknown, reference values are taken from the works published for validation purposes. The reference values are given individually for each test case. We examine the convergence rates for the reference eigenvalue approximations in terms of the difference between the approximate value and the reference value, normalized by the latter. For each test case, we illustrate the results on a log-log scaled plane. 

In the simulations, the displacement (or velocity) components are taken as zero on the whole boundary, whereas the pressure is specified to be zero at a single point of the computational domain. The computations are carried out by a code written by us using  MATLAB, where the generalized eigenvalue function \textsl{eigs}, which uses ARPACK, is involved. The number of divisions in each direction is denoted by $N$. For the L-shaped domain, $N$ is the number of division in one of the shortest edges. 

\subsection{Test 1: Square domain}
\label{subsec:square}

In this test, we consider a widely used experiment, and solve the eigenproblems on the square $\Omega= [0,1] \times [0,1]$. A sample discretization of the problem domain using $N=5$ is illustrated in Figure \ref{fig:mesh1}. 
\begin{figure}[!h]\setlength{\unitlength}{1cm}
\centering
\includegraphics[width=5.5cm, height=5.3cm]{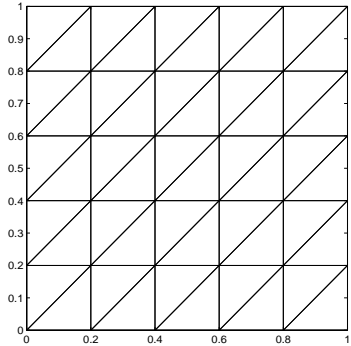}%
\caption{\small  A sample triangulation of the square domain ($N=5$). }\label{fig:mesh1}
\end{figure}
As we have already mentioned, the exact solution is unknown, and we take ${\lambda}_1 = 52.3447$ as a reference to the minimum eigenvalue (see \cite{armentano2014,liu2013,huang2011several}). 

\begin{figure}[!h]\setlength{\unitlength}{1cm}
\centering
\includegraphics[width=7cm]{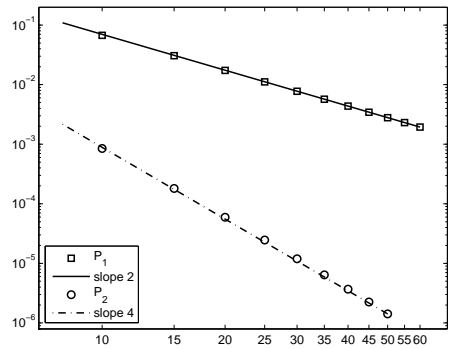}%
\caption{\small  Plot of $\log|(\lambda_1 -\lambda_h)/ \lambda_1|$ with respect to $\log|N|$for the two-field Stokes problem on a square domain.}\label{fig:f2err_sq}
\end{figure}
\begin{figure}[!h]\setlength{\unitlength}{1cm}
\centering
\includegraphics[width=7cm]{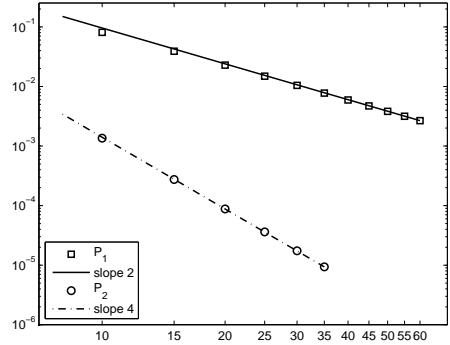}%
\caption{\small  Plot of  $\log | (\lambda_1 -\lambda_h)/ \lambda_1| $ with respect to $\log|N|$ for the three-field Stokes problem on a square domain.}\label{fig:f3err_sq}
\end{figure}

Figures \ref{fig:f2err_sq}  and \ref{fig:f3err_sq} present the convergence of the minimum eigenvalue approximations to the reference value ${\lambda}_1$  for the two-field and three-field problems, respectively. From the two figures, we can observe the optimal convergence rates, which are $2$ for $P_1$ elements and $4$ for $P_2$ elements for both approximations. These calculations prove numerically that the theoretical convergence results are achieved. To have a closer glance at the computed eigenvalues, the approximation to the first eigenvalue as well as the error values are listed in Tables \ref{tab:2fp1p1}-\ref{tab:3fp1p1} (using $P_1$ elements) and Tables \ref{tab:2fp2p2}-\ref{tab:3fp2p2} (using $P_2$ elements).  It can be seen from these tables that both for the $P_1$ and $P_2$ solutions, as the number of divisions ($N$) increases, and thus as $h$ tends to zero accordingly, the computed eigenvalues converge to the reference value. Moreover, the results show a monotonic convergence of the approximations from above.
\begin{table}[!h]
\caption{Computed eigenvalues of two-field Stokes problem on a square domain 
  using $P_1$ elements.} \label{tab:2fp1p1}
\begin{center} 
\begin{tabular}{ccc}
    \hline \hline  $N$  & $ {\lambda}_h$  & $ {(\lambda_1 -\lambda_h)/ \lambda_1}$
 \tabularnewline  \hline  
10 &  55.8688
 & 0.0673
\tabularnewline 15 & 53.9453
 &   0.0306
\tabularnewline  20 &  53.2514
 & 0.0173
\tabularnewline 25 & 52.9270
 &  0.0111
\tabularnewline  30 &  52.7498
 &    0.0077
\tabularnewline  35& 52.6426
 &    0.0057
\tabularnewline  40& 52.5729
 &    0.0044
\tabularnewline  45& 52.5251
 &   0.0034
\tabularnewline  50& 52.4908
 &    0.0028
\tabularnewline  55& 52.4655
 &    0.0023
\tabularnewline  60& 52.4462
 &    0.0019
\tabularnewline \hline
\end{tabular}
\end{center}
\end{table}
\begin{table}[!h]
\caption{Computed eigenvalues of two-field Stokes problem   on square domain
  using $P_2$ elements.} \label{tab:2fp2p2}
\begin{center} 
\begin{tabular}{ccc}
    \hline \hline  $N$  & $ {\lambda}_h$  & $ {(\lambda_1 -\lambda_h)/ \lambda_1}$
 \tabularnewline  \hline  
10 & 52.389177613831528
 & $8.4971\times 10^{-4}$
\tabularnewline 15 & 52.354184532067258
 &  $1.8119\times 10^{-4}$
\tabularnewline  20 & 52.347805305859254 
 &  $5.9324 \times 10^{-5}$
\tabularnewline 25 &  52.345990378868223
 &   $2.4652\times 10^{-5}$
\tabularnewline  30 & 52.345324052984957
 &   $1.1922 \times 10^{-5}$
\tabularnewline  35& 52.345034782505891
 &   $ 6.3957 \times 10^{-6}$
\tabularnewline  40&  52.344893303689837
 &    $ 3.6929 \times 10^{-6}$
\tabularnewline  45&  52.344817643340264
 & $ 2.2475\times 10^{-6}$
\tabularnewline  50&  52.344774270297329
 & $1.4189 \times 10^{-6} $
\tabularnewline \hline
\end{tabular}
\end{center}
\end{table}
\begin{table}[!h]
\caption{Computed eigenvalues of three-field Stokes problem   on square domain
  using $P_1$ elements.} \label{tab:3fp1p1}
\begin{center} 
\begin{tabular}{ccc}
    \hline \hline  $N$  & $ {\lambda}_h$  &  $ {(\lambda_1 -\lambda_h)/ \lambda_1}$
 \tabularnewline  \hline  
10 & 56.5919
 & 0.0811
\tabularnewline 15 & 54.3902
 &  0.0391
\tabularnewline  20 & 53.5378
 &  0.0228
\tabularnewline 25 & 53.1231 
 &  0.0149
\tabularnewline  30 & 52.8913
 &  0.0104
\tabularnewline  35& 52.7491
 &  0.0077
\tabularnewline  40& 52.6558
 &  0.0059
\tabularnewline  45& 52.5913
 &  0.0047
\tabularnewline  50& 52.5449
 &  0.0038
\tabularnewline  55& 52.5104
 &  0.0032
\tabularnewline  60& 52.4841
 &  0.0027
\tabularnewline \hline
\end{tabular}
\end{center}
\end{table}
\begin{table}[!h]
\caption{Computed eigenvalues of three-field Stokes problem  on square domain
  using $P_2$ elements.} \label{tab:3fp2p2}
\begin{center} 
\begin{tabular}{ccc}
    \hline \hline  $N$  & $ {\lambda}_h$  &  $ {(\lambda_1 -\lambda_h)/ \lambda_1}$
 \tabularnewline  \hline  
10 & 52.415573819924084
 & $1.3540 \times 10^{-3}$
\tabularnewline 15 & 52.359070017800590
 &  $2.7453 \times 10^{-4}$ 
\tabularnewline  20 & 52.349305192050018
 &  $8.7978 \times 10^{-5}$
\tabularnewline 25 & 52.346595128313346
 &   $3.6205\times 10^{-5}$
\tabularnewline  30 & 52.345613136524591
 &   $1.7445\times 10^{-5}$
\tabularnewline  35& 52.345190028331487
 &   $9.3616\times 10^{-6}$
\tabularnewline \hline
\end{tabular}
\end{center}
\end{table}

\begin{table}[!h]
\caption{Computed ten eigenvalues of two-field Stokes problem on a square domain
  using $P_1$ elements.} \label{tab:eg10_2fp1}
\begin{center} 
\scriptsize
\begin{tabular}{cccccccc}
\hline \hline Ref. & $N=10$ & $N=15$ & $N=20$ & $N=25$ & $N=30$ & $N=35$ & $N=40$\tabularnewline\hline 
  52.3447 & 55.8688 & 53.9453 & 53.2514 & 52.927 & 52.7498 & 52.6426 & 52.5729\tabularnewline
\hline 
   92.1245 & 99.9955 & 95.7656 & 94.1952 & 93.4560 & 93.0514 & 92.8065 & 92.6471\tabularnewline
\hline 
   92.1246 & 104.6259 & 97.7599 & 95.3019 & 94.1591 & 93.5375 & 93.1626 & 92.9192\tabularnewline
\hline 
  128.2100 & 148.7460 & 138.2263 & 133.9922 & 131.9494 & 130.8203 & 130.1333 & 129.6851\tabularnewline
\hline 
  154.1260 & 179.5074 & 165.7321 & 160.7009 & 158.3444 & 157.0584 & 156.2813 & 155.7763\tabularnewline
\hline 
  167.0298 & 196.3993 & 179.8558 & 174.1717 & 171.5767 & 170.1783 & 169.3389 & 168.7957\tabularnewline
\hline 
  189.5729 & 221.5153 & 205.7600 & 199.0557 & 195.7457 & 193.8967 & 192.7654 & 192.0246\tabularnewline
\hline 
  189.5735 & 240.9553 & 214.2593 & 203.7305 & 198.6940 & 195.9248 & 194.2460 & 193.1532\tabularnewline
\hline 
  246.3240 & 303.4553 & 271.6308 & 260.4574 & 255.3276 & 252.5584 & 250.8957 & 249.8195\tabularnewline
\hline 
  246.3243 & 304.9802 & 275.3703 & 262.4826 & 256.6058 & 253.4404 & 251.5414 & 250.3128\tabularnewline
\hline 
\end{tabular}
\end{center}
\end{table}

\begin{table}[!h]
\caption{Computed ten eigenvalues of two-field Stokes problem on a square domain
  using $P_2$ elements.} \label{tab:eg10_2fp2}
\begin{center} 
\scriptsize
\begin{tabular}{cccccccc}
\hline \hline Ref. & $N=10$ & $N=15$ & $N=20$ & $N=25$ & $N=30$ & $N=35$ & $N=40$\tabularnewline\hline 
  52.3447 & 52.3892 & 52.3542 & 52.3478 & 52.3460 & 52.3453 & 52.3450 & 52.3449\tabularnewline
\hline 
   92.1245 & 92.2650 & 92.1540 & 92.1341 & 92.1285 & 92.1264 & 92.1255 & 92.1250\tabularnewline
\hline 
   92.1246 & 92.3546 & 92.1731 & 92.1402 & 92.1310 & 92.1276 & 92.1261 & 92.1254\tabularnewline
\hline 
  128.2100 & 128.8179 & 128.3406 & 128.2526 & 128.2276 & 128.2184 & 128.2144 & 128.2124\tabularnewline
\hline 
  154.1260 & 154.7857 & 154.2660 & 154.1712 & 154.1445 & 154.1347 & 154.1305 & 154.1284\tabularnewline
\hline 
  167.0298 & 167.8012 & 167.1932 & 167.0829 & 167.0516 & 167.0401 & 167.0351 & 167.0327\tabularnewline
\hline 
  189.5729 & 190.8794 & 189.8582 & 189.6665 & 189.6116 & 189.5913 & 189.5825 & 189.5781\tabularnewline
\hline 
  189.5735 & 191.5500 & 190.0079 & 189.7160 & 189.6322 & 189.6013 & 189.5879 & 189.5813\tabularnewline
\hline 
  246.3240 & 248.3017 & 246.7483 & 246.4620 & 246.3806 & 246.3507 & 246.3377 & 246.3314\tabularnewline
\hline 
  246.3243 & 248.6870 & 246.8347 & 246.4907 & 246.3926 & 246.3566 & 246.3409 & 246.3332\tabularnewline
\hline 
\end{tabular}
\end{center}
\end{table}
Furthermore, we want to look at the first ten eigenvalue approximations with comparison to the reference values obtained by the standard Galerkin method using $P_2$-$P_1$ interpolations satisfying the appropriate inf-sup condition on a fine mesh ($N=60$). 
The results are shown in Table \ref{tab:eg10_2fp1} and Table \ref{tab:eg10_2fp2} using respectively $P_1$ elements and $P_2$ elements for the two-field case. The results for the three-field case are shown in Table \ref{tab:eg10_3fp1} (using $P_1$ elements) and Table \ref{tab:eg10_3fp2} (using $P_2$ elements). The numerical results show that the approximations for all the first ten eigenvalues in the calculated spectrum converge to the corresponding reference solutions, and the approximated values are above the reference solutions for all cases. 

\begin{table}[!h]
\caption{Computed ten eigenvalues of three-field Stokes problem on a square domain
  using $P_1$ elements.} \label{tab:eg10_3fp1}
\begin{center} 
\scriptsize
\begin{tabular}{cccccccc}
\hline \hline Ref. & $N=10$ & $N=15$ & $N=20$ & $N=25$ & $N=30$ & $N=35$ & $N=40$\tabularnewline\hline 
52.3447 & 56.5919 & 54.3902 & 53.5378 & 53.1231 & 52.8913 & 52.7491 & 52.6558\tabularnewline
\hline 
   92.1245 & 98.9870 & 95.8558 & 94.4066 & 93.6468 & 93.2066 & 92.9310 & 92.7479\tabularnewline
\hline 
   92.1246 & 106.0693 & 98.9834 & 96.1631 & 94.7706 & 93.9869 & 93.5043 & 93.1867\tabularnewline
\hline 
  128.2100 & 148.9444 & 140.1753 & 135.6434 & 133.2037 & 131.7734 & 130.8721 & 130.2706\tabularnewline
\hline 
  154.1260 & 172.7031 & 164.8846 & 160.9287 & 158.7355 & 157.4291 & 156.5992 & 156.0429\tabularnewline
\hline 
  167.0298 & 189.4507 & 179.0202 & 174.4194 & 171.9815 & 170.5594 & 169.6654 & 169.0694\tabularnewline
\hline 
  189.5729 & 212.7258 & 205.2605 & 199.9178 & 196.7485 & 194.7888 & 193.5145 & 192.6472\tabularnewline
\hline 
  189.5735 & 238.4948 & 218.2487 & 207.5491 & 201.7170 & 198.2669 & 196.0809 & 194.6171\tabularnewline
\hline 
  246.3240 & 269.6729 & 263.5867 & 258.3683 & 254.8240 & 252.5404 & 251.0333 & 250.0011\tabularnewline
\hline 
  246.3243 & 282.5791 & 268.1758 & 260.8985 & 256.4580 & 253.6864 & 251.8820 & 250.6549\tabularnewline
\hline 
\end{tabular}
\end{center}
\end{table}
\begin{table}[!h]
\caption{Computed ten eigenvalues of three-field Stokes problem on a square domain
  using $P_2$ elements.} \label{tab:eg10_3fp2}
\begin{center} 
\scriptsize
\begin{tabular}{ccccccc}
\hline \hline Ref. & $N=10$ & $N=15$ & $N=20$ & $N=25$ & $N=30$ & $N=35$ \tabularnewline\hline 
 52.3447 & 52.4156 & 52.3591 & 52.3493 & 52.3466 & 52.3456 & 52.3452  \tabularnewline
\hline 
   92.1245 & 92.2684 & 92.1548 & 92.1343 & 92.1285 & 92.1264 & 92.1255 \tabularnewline
\hline 
   92.1246 & 92.4690 & 92.1927 & 92.1461 & 92.1333 & 92.1287 & 92.1267 \tabularnewline
\hline 
  128.2100 & 129.306 & 128.4260 & 128.2782 & 128.2378 & 128.2232 & 128.2170 \tabularnewline
\hline 
  154.1260 & 154.6511 & 154.2415 & 154.1636 & 154.1414 & 154.1332 & 154.1297 \tabularnewline
\hline 
  167.0298 & 167.5411 & 167.1493 & 167.0696 & 167.0462 & 167.0375 & 167.0337 \tabularnewline
\hline 
  189.5729 & 191.4011 & 189.9504 & 189.6936 & 189.6223 & 189.5964 & 189.5852 \tabularnewline
\hline 
  189.5735 & 193.3702 & 190.3109 & 189.8036 & 189.6665 & 189.6175 & 189.5965 \tabularnewline
\hline 
  246.3240 & 246.4209 & 246.4648 & 246.3805 & 246.3486 & 246.3356 & 246.3297 \tabularnewline
\hline 
  246.3243 & 246.8890 & 246.5649 & 246.4125 & 246.3617 & 246.3419 & 246.3331 \tabularnewline
\hline 
\end{tabular}
\end{center}
\end{table}
\begin{figure}[!h]\setlength{\unitlength}{0.85cm}
\centering
\includegraphics[width=9.0cm]{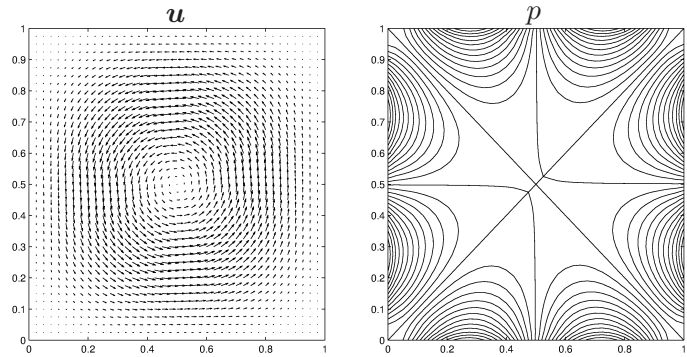}%
\begin{picture}(0,0)(0,0)
\put(-8.0,5.3){\makebox(0,0){\small $\u$}}
\put(-2.4,5.3){\makebox(0,0){\small  $p$}}
\end{picture}%
\caption{\small  Plots of $\u$ and $p$ for $N=40$ with $P_1$ elements (two-field, square domain).}
\label{fig:velp2fsqN40}
\end{figure}
\begin{figure}[!h]\setlength{\unitlength}{1cm}
\centering
\includegraphics[width=12cm]{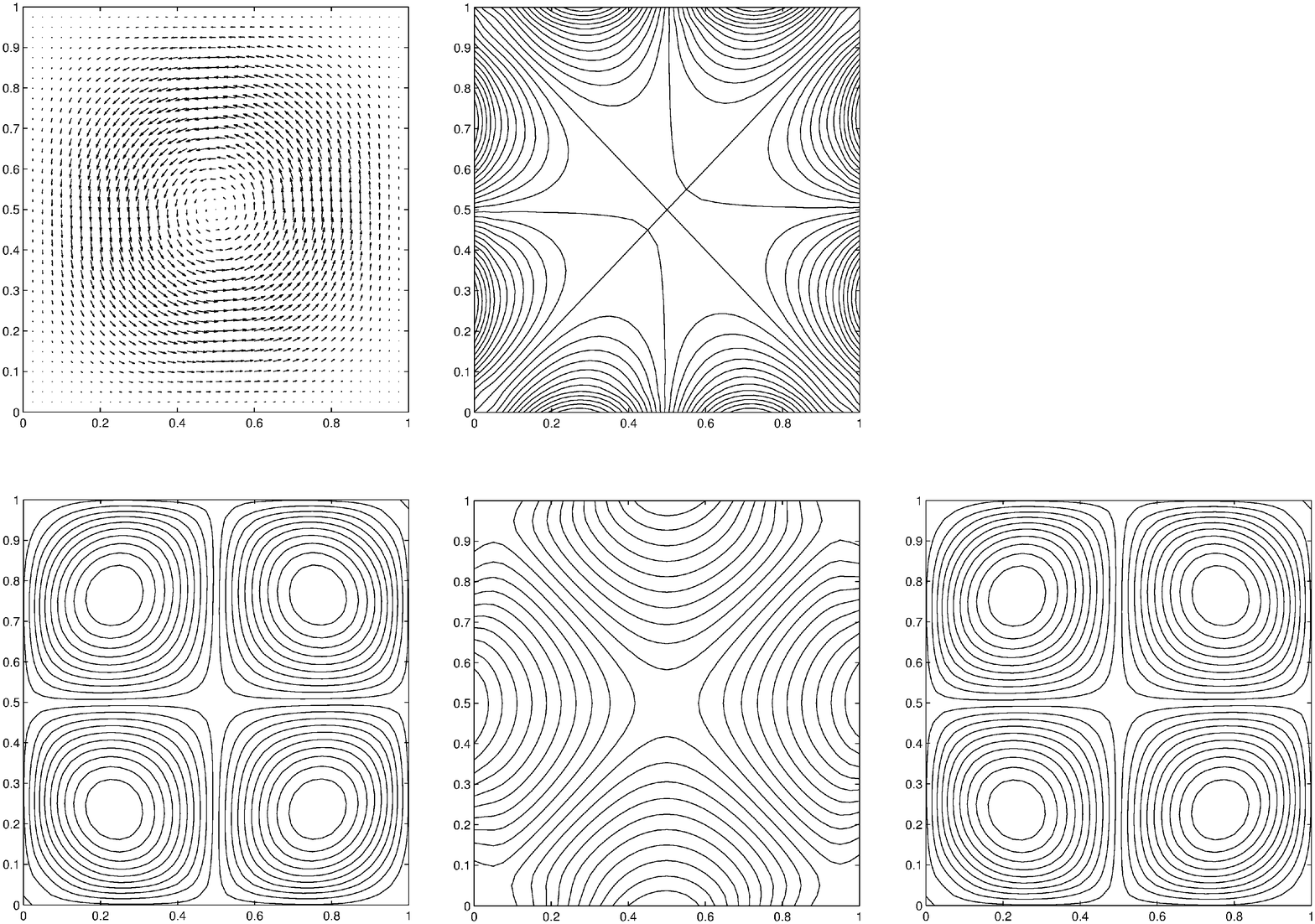}%
\begin{picture}(0,0)(0,0)
\put(-10.0,8.45){\makebox(0,0){\small $\u$}}
\put(-5.9,8.45){\makebox(0,0){\small  $p$}}
\put(-10.0,4.0){\makebox(0,0){\small $\sigma_{11}$}}
\put(-5.9,4.0){\makebox(0,0){\small $\sigma_{12}$}}
\put(-1.8,4.0){\makebox(0,0){\small $\sigma_{22}$}}
\end{picture}%
\caption{\small  Plots of $\u$, $p$ and $\st$-components for $N=40$ with $P_1$ elements (three-field, square domain).}
\label{fig:velpsigmaf3N40sq}
\end{figure}

In order to compare our results qualitatively, we plot the unknowns, when $N=60$ using $P_1$ elements in Figure \ref{fig:velp2fsqN40} for the two-field case, and in Figure \ref{fig:velpsigmaf3N40sq} for the three-field case.  
Comparing these two figures, one can see the perfect agreement in the velocity and pressure profiles obtained with the two formulations. Moreover, we can observe that the behavior of the velocity streamlines and pressure levels are in good agreement with the previously published results \cite{armentano2014,huang2011several}.

Before proceeding, we want to report an unexpected behavior we have encountered during our numerical experiments. As the numerical analysis for both two-field and three-field source problems suggests, the numerical constants in the stabilization parameters can be arbitrarily chosen in a wide range. Considering the source problems, for all cases this conclusion has been validated by testing different combinations of the parameters chosen from a very large interval. For the eigenvalue problems, this is also true for the two-field case. However, considering the approximation of first ten eigenvalues for the three-field eigenproblem, when we test the method with constants approximately ten times larger than our default values, we have observed that for certain cases spurious node-to-node oscillations in the approximations are developed. This bad behavior only exists in the seventh and tenth modes, and only for the three-field case for both $P_1$ and $P_2$ elements, using high values for the algorithmic constants. {In other words, the correct values are well approximated for larger values of the stabilization constants for the two-field problem; however, a bad behavior is observed in two approximations of the first ten eigenvalues for the three-field case. These results lead us to think that} a possible reason for this issue could be related to the deficiency of the algorithm that computes the eigenvalues for the structure of the resulting system {in the three-field case}. 

\subsection{Test 2: L-shaped domain}
\label{subsec:lshape}

In the previous example we have considered a convex domain and showed that the convergence estimates are recovered numerically for both two-field and three-field cases. Next, we want to examine a test case with an L-shaped domain with a re-entrant corner, defined by $\Omega= [-1,1]^2 \setminus [0,1]^2$. The problem domain with a discretization where $N=5$ is shown in Figure \ref{fig:mesh2}.
\begin{figure}[!h]\setlength{\unitlength}{1cm}
\centering
\includegraphics[width=5.8cm, height=5.3cm]{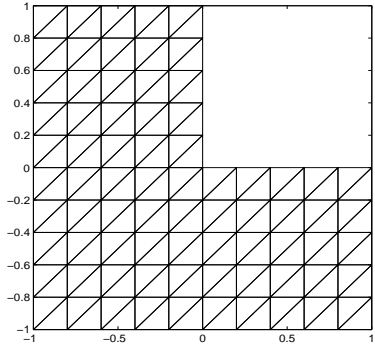}%
\caption{\small  A sample triangulation of  the L-shaped domain ($N=5$). }\label{fig:mesh2}
\end{figure}

For this experiment, we consider ${\lambda}_2 = 48.9844$ as the reference value to the fourth eigenvalue. It is known that the dual problem has $H^{\iota +1}$ regularity where $0<\iota<1$ \cite{liu2013}. 
The convergence results obtained for the two-field problem are shown in Figure \ref{fig:f2errL}, where the reference values are given in Table \ref{tab:2fp1p1_l} and \ref{tab:2fp2p2_l} using $P_1$ and $P_2$ elements, respectively. Similarly, 
Figure \ref{fig:f3errL}  plots the convergence results for the three-field case, whereas the approximated values are listed in Table \ref{tab:3fp1p1_l} ($P_1$ results) and Table \ref{tab:3fp2p2_l} ($P_2$ results).  We conclude from these results that the method achieves a double order of convergence from above, for the errors of the approximated eigenvalues. Further, we can infer that the reference eigenfunction corresponding to the fourth eigenvalue is smooth, complying with the results reported in \cite{liu2013}.  

\begin{table}[!h]
\caption{Computed eigenvalues of the two-field Stokes problem   
 on a L-shaped domain using $P_1$ elements.} \label{tab:2fp1p1_l}
\begin{center} 
\begin{tabular}{ccc}
    \hline \hline  $N$  & $  \lambda_h$  & $ {(\lambda_2 -\lambda_h})/ \lambda_2$
 \tabularnewline  \hline  
5 &  58.6756
 &   0.1978
\tabularnewline 10 & 51.8885 
 &  0.0593     
\tabularnewline  15 &   50.3119
 &  0.0271  
\tabularnewline  20 & 49.7384
 &    0.0154
\tabularnewline  25 &  49.4692
 &     0.0099 
\tabularnewline  30 &  49.3218
 &   0.0069
\tabularnewline \hline
\end{tabular}
\end{center}
\end{table}
 
\begin{table}[!h]
\caption{Computed eigenvalues of the two-field Stokes problem   
on a L-shaped domain  using $P_2$ elements.} \label{tab:2fp2p2_l}
\begin{center} 
\begin{tabular}{ccc}
     \hline \hline  $N$  & $  \lambda_h$  & $ {(\lambda_2 -\lambda_h})/ \lambda_2$
 \tabularnewline  \hline  
5 &  49.8045
 &  0.0167
\tabularnewline 10 &  49.0428
 & 0.0012      
\tabularnewline  15 &  48.9959
 & 0.0002  
\tabularnewline  20 & 48.9877
 & 0.0001
 
\tabularnewline \hline
\end{tabular}
\end{center}
\end{table}

\begin{table}[!h]
\caption{Computed eigenvalues of the three-field Stokes problem   
 on a L-shaped domain  using $P_1$ elements.} \label{tab:3fp1p1_l}
\begin{center} 
\begin{tabular}{ccc}
    \hline \hline  $N$  & $  \lambda_h$  & $ {(\lambda_2 -\lambda_h})/ \lambda_2$
 \tabularnewline  \hline  
5 & 51.9184  
 & 0.0599   
\tabularnewline 10 & 49.8498 
 & 0.0177      
\tabularnewline  15 & 49.4469   
 & 0.0094   
\tabularnewline  20 & 49.2607 
 & 0.0056   
\tabularnewline  25 & 49.1658 
 & 0.0037    
\tabularnewline  30 & 49.1120  
 & 0.0026  
\tabularnewline \hline
\end{tabular}
\end{center}
\end{table}
 
\begin{table}[!h]
\caption{Computed eigenvalues of the three-field Stokes problem   
 on a L-shaped domain  using $P_2$ elements.} \label{tab:3fp2p2_l}
\begin{center} 
\begin{tabular}{ccc}
     \hline \hline  $N$  & $  \lambda_h$  & $ {(\lambda_2 -\lambda_h})/ \lambda_2$
 \tabularnewline  \hline  
5 &  49.4628  
 &  0.0098
\tabularnewline 10 &   49.0224
 &  0.0008    
\tabularnewline  15 &    48.9923
 &  0.0002  
\tabularnewline  20 &  48.9867
 & 0.0000
\tabularnewline \hline
\end{tabular}
\end{center}
\end{table}

\begin{figure}[!h]\setlength{\unitlength}{1cm}
\centering
\includegraphics[width=7cm]{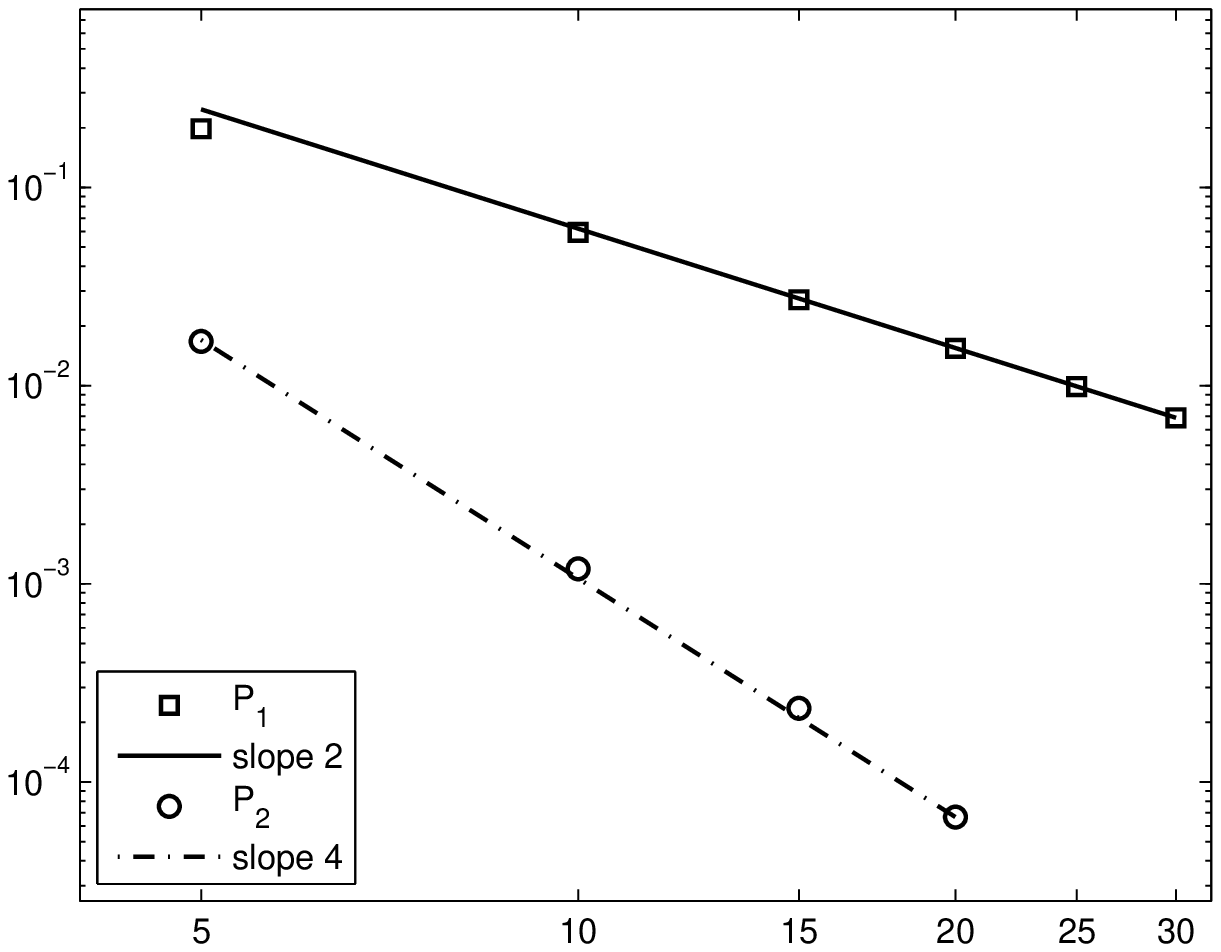}%
\caption{\small  Plot of  $\log | (\lambda_2 -\lambda_h)/ \lambda_2| $ with respect to $\log|N|$ for the two-field Stokes problem on a L-shaped domain.}\label{fig:f2errL}
\end{figure}

\begin{figure}[!h]\setlength{\unitlength}{1cm}
\centering
\includegraphics[width=7cm]{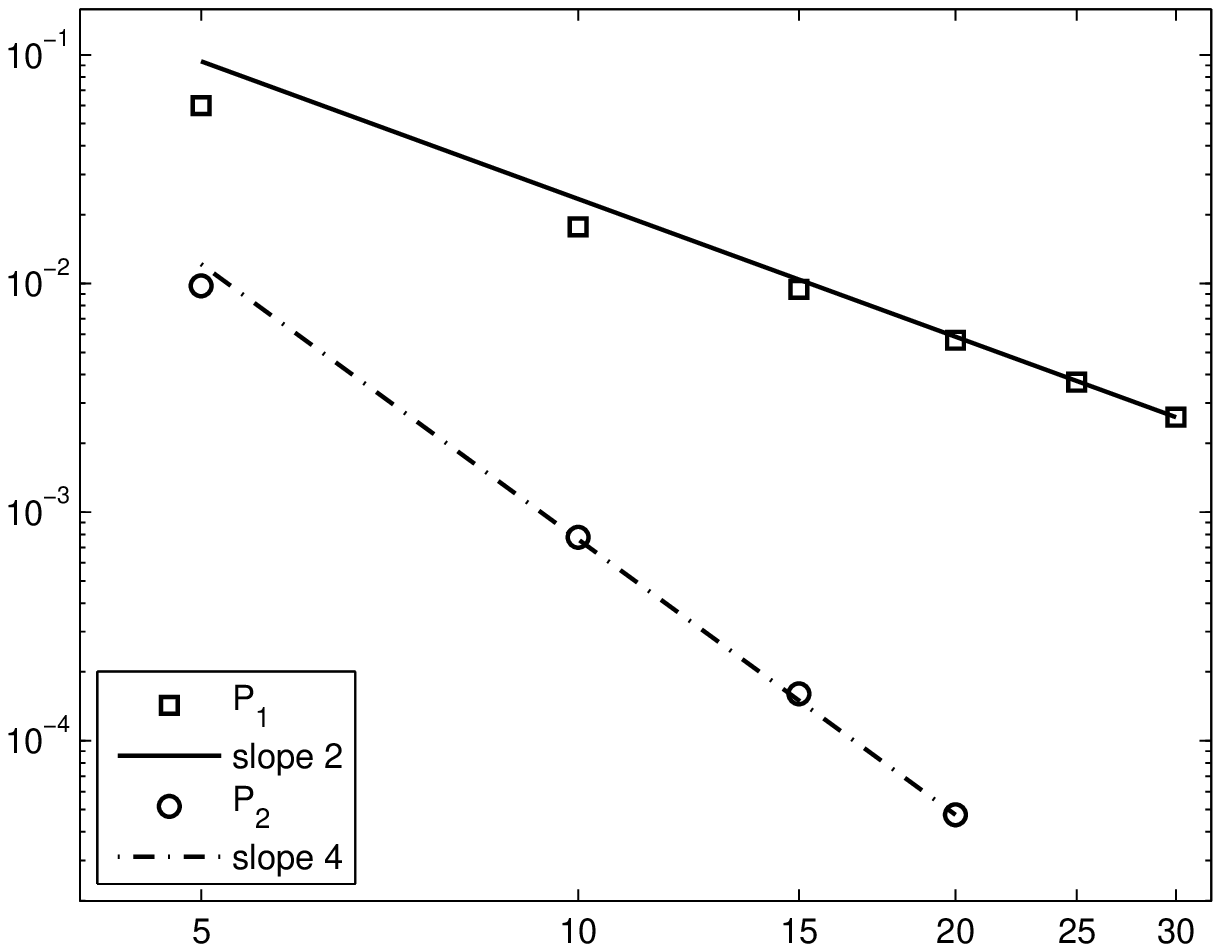}%
\caption{\small  Plot of  $\log | (\lambda_2 -\lambda_h)/ \lambda_2| $ with respect to $\log|N|$ for the three-field Stokes problem on a L-shaped domain.}\label{fig:f3errL}
\end{figure}

\subsection{Test 3: Cracked square domain}
\label{subsec:cracked}

Having dealt with two examples having analytic solutions, we consider another domain with a re-entrant corner, namely, a square with a 45-degrees crack, as the last test. The problem domain is discretized by a sequence of unstructured triangular meshes, and the total number of vertices is denoted by $M$. Figure \ref{fig:mesh3} shows the problem domain and a sample discretization where $M=136$. 

We take $\lambda_3=31.2444$ as the reference solution to the first eigenvalue for this experiment. The corresponding solution is known to be singular \cite{liu2013}. 

\begin{figure}[!h]\setlength{\unitlength}{1cm}
\centering
\includegraphics[width=5.8cm, height=5.3cm]{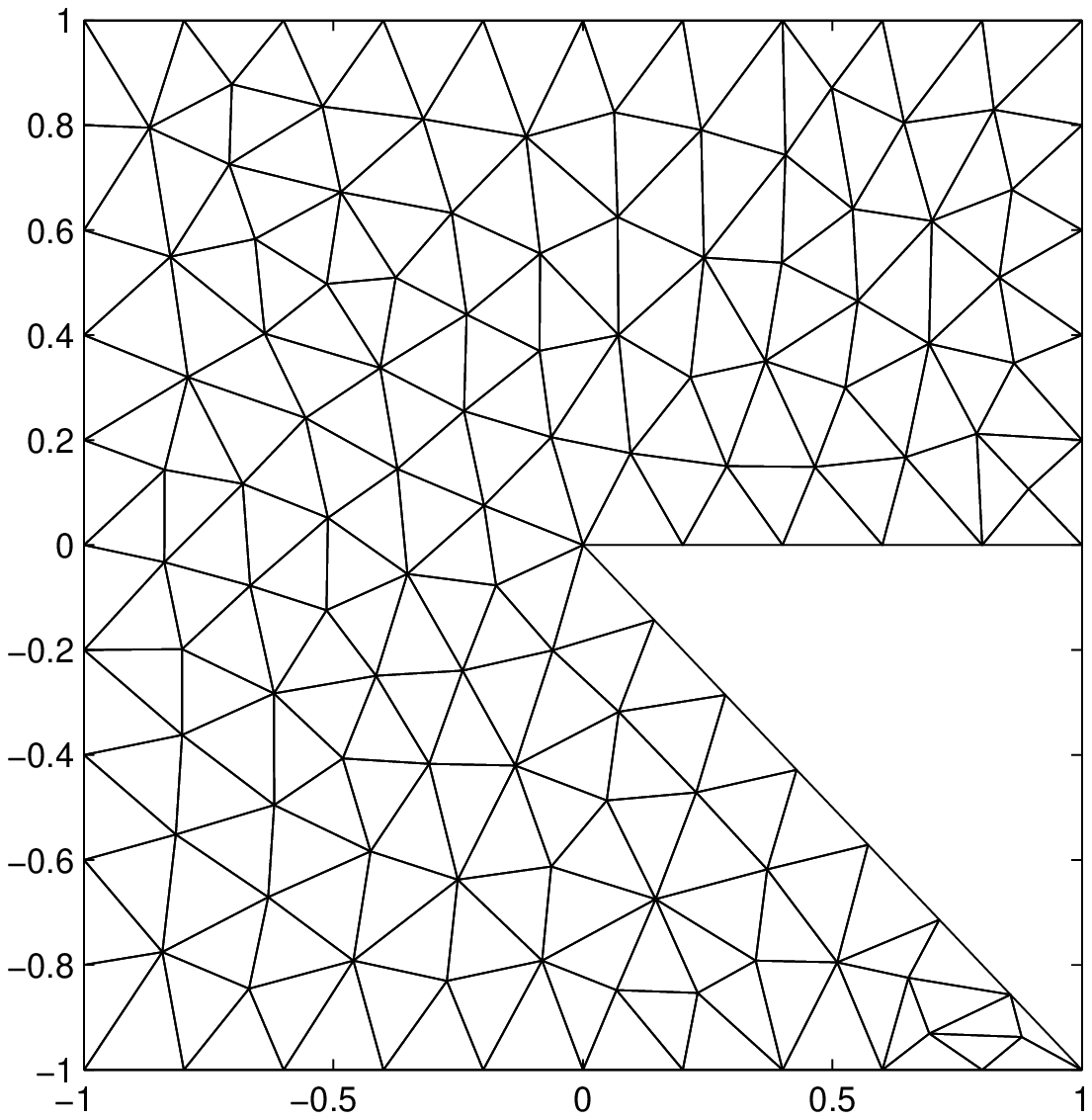}%
\caption{\small  A sample triangulation of  the cracked square ($M=136$). }\label{fig:mesh3}
\end{figure}

Tables \ref{tab:2fp1p1_cracked} and \ref{tab:2fp2p2_cracked} list the first eigenvalue approximations together with the relative errors, using $P_1$ and $P_2$ elements, respectively, for the two-field case. Similarly, in Tables \ref{tab:3fp1p1_cracked} and  \ref{tab:3fp2p2_cracked} we present the results for the three-field case.  

\begin{table}[!h]
\caption{Computed eigenvalues of the two-field Stokes problem   
 on a cracked square using $P_1$ elements.} \label{tab:2fp1p1_cracked}
\begin{center} 
\begin{tabular}{ccc}
    \hline \hline  $M$  & ${\lambda}_h$  &  $ {(\lambda_3 -\lambda_h)/ \lambda_3}$
 \tabularnewline  \hline  
136 & 33.9006
 & 0.0850
\tabularnewline 477 &  32.1248
 &  0.0282     
\tabularnewline  989  &   31.6912
 & 0.0143  
\tabularnewline  1861 &   31.4876 
 &  0.0078 
\tabularnewline  2515 &  31.4485
 &   0.0065 
\tabularnewline  3489 &  31.4013
 &    0.0050
\tabularnewline \hline
\end{tabular}
\end{center}
\end{table}
\begin{table}[!h]
\caption{Computed eigenvalues of the two-field Stokes problem   
 on a cracked square using $P_2$ elements.} \label{tab:2fp2p2_cracked}
\begin{center} 
\begin{tabular}{ccc}
    \hline \hline  $M$  & $ {\lambda}_h$  & $ {(\lambda_3 -\lambda_h)/ \lambda_3}$
 \tabularnewline  \hline  
136 &  31.4697
 &  0.0072  
\tabularnewline 477 &  31.3694  
 &   0.0040 
\tabularnewline  989 & 31.3215 
 &    0.0025 
\tabularnewline  1861 & 31.3102 
 & 0.0021   
\tabularnewline  2515 & 31.3074
 &  0.0020  
\tabularnewline  3489 & 31.3022 
 &   0.0018 

\tabularnewline \hline
\end{tabular}
\end{center}
\end{table}
\begin{table}[!h]
\caption{Computed eigenvalues of the three-field Stokes problem   
 on a cracked square using $P_1$ elements.} \label{tab:3fp1p1_cracked}
\begin{center} 
\begin{tabular}{ccc}
    \hline \hline  $M$  & $ {\lambda}_h$  & $ {(\lambda_3 -\lambda_h)/ \lambda_3}$
 \tabularnewline  \hline  
136 & 35.5336  
 &  0.1373 
\tabularnewline 477&   33.6856 
 & 0.0781      
\tabularnewline  989   &  32.6086  
 & 0.0437
\tabularnewline  1861 &   32.0648    
 & 0.0263 
\tabularnewline  2515 & 31.9739   
 &   0.0233
\tabularnewline  3489 &  31.8046   
 &  0.0179
\tabularnewline \hline
\end{tabular}
\end{center}
\end{table}
\begin{table}[!h]
\caption{Computed eigenvalues of the three-field Stokes problem   
 on a cracked square using $P_2$ elements.} \label{tab:3fp2p2_cracked}
\begin{center} 
\begin{tabular}{ccc}
    \hline \hline  $M$  & $ {\lambda}_h$  & $ {(\lambda_3 -\lambda_h)/ \lambda_3}$
 \tabularnewline  \hline  
136 &  32.0113   
 &  0.0245
\tabularnewline 477 &  31.6124 
 &  0.0118    
\tabularnewline  989  &  31.4632
 & 0.0070 
\tabularnewline 1861 &  31.4086  
 &  0.0053   
\tabularnewline \hline
\end{tabular}
\end{center}
\end{table}

The convergence results in terms of the errors are displayed in Figure \ref{fig:f2err_cr} for the two-field case and in Figure \ref{fig:f3err_cr} for the three-field case. The results show that the monotonic approximation property of the method is also preserved for this example, and the approximation orders are higher than the reference value for all cases considered. The figures indicate that the asymptotic regime has not been reached yet, and the number of elements has to be further increased  in order to  obtain a linear dependence of the error on the number of total vertices. We clearly infer that the convergence order has decreased for the problem where we do not have global regularity, and the solution is not analytic. Thus, we can conclude that the convergence is driven by the regularity of the continuous solution, as expected.

\begin{figure}[!h]\setlength{\unitlength}{1cm}
\centering
\includegraphics[width=7cm]{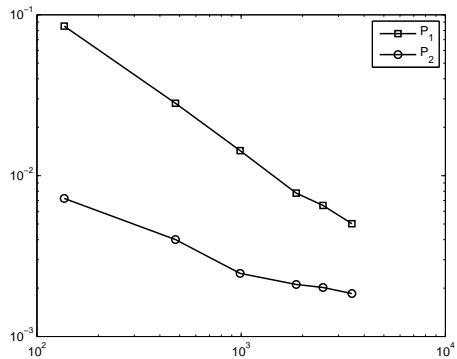}%
\caption{\small  Plot of $\log|(\lambda_3 -\lambda_h)/ \lambda_3|$ with respect to $\log|M|$for the two-field Stokes problem on a cracked square domain.}\label{fig:f2err_cr}
\end{figure}
\begin{figure}[!h]\setlength{\unitlength}{1cm}
\centering
\includegraphics[width=7cm]{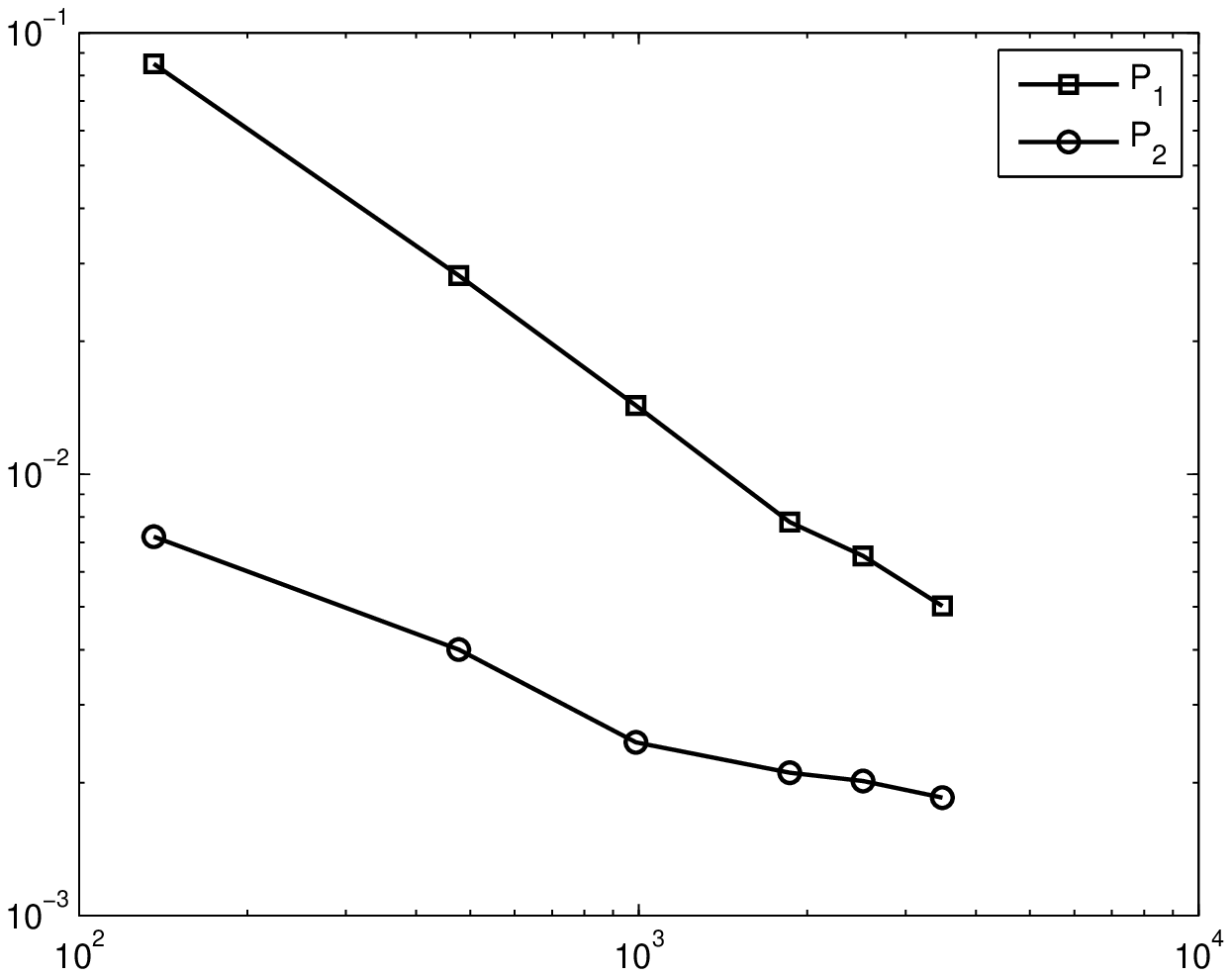}%
\caption{\small  Plot of  $\log | (\lambda_3 -\lambda_h)/ \lambda_3| $ with respect to $\log|M|$ for the three-field Stokes problem on a cracked square domain.}\label{fig:f3err_cr}
\end{figure}

\section{Conclusions}
\label{sec:conc}

The stabilized finite element formulation based on the  application of subgrid scale concept to the two-field and three-field Stokes eigenvalue problems has been presented. The virtue of the method relies in considering the subscales orthogonal to the finite element space; the fact that the orthogonal projection of the displacements (or velocities) vanishes provides an essential property which makes the method very convenient for eigenvalue problems. The finite element approximation to the three-field Stokes eigenvalue problem is  another novel contribution of the paper. The convergence and error estimates are based on the finite element analysis of the corresponding source problems.  The formulations are shown to be optimally convergent for a given set of algorithmic parameters on which the methods depend. The numerical computations show that the accuracy of the method is the one expected from the convergence analysis, and the theoretical convergence rates for all the experiments considered are exactly achieved in the numerical results presented.

\section*{Acknowledgments}

The first author acknowledges the support received from the Scientific and Technological Research Council of Turkey (TUBITAK) 2219-Postdoctoral Research Program Grant. The second author was partially supported by PRIN/MIUR, by GNCS/INDAM,
and by IMATI/CNR. The third author is grateful to the ICREA Acad\`emia Program, from the Catalan government.

\bibliographystyle{ieeetr}
\bibliography{latexbi}

\end{document}